\numberwithin{figure}{section}
\newtheorem{lemma}{Lemma}[section]
\newtheorem{corollary}[lemma]{Corollary}
\newtheorem{theorem}[lemma]{Theorem}
\newtheorem{prop}[lemma]{Proposition}
\newtheorem{remark}[lemma]{Remark}
\newtheorem{exa}[lemma]{Example}
\newcommand{\uloopr}[1]{\ar@'{@+{[0,0]+(-4,5)}@+{[0,0]+(0,10)}@+{[0,0] +(4,5)}}^{#1}}
\newcommand{\uloopd}[1]{\ar@'{@+{[0,0]+(5,4)}@+{[0,0]+(10,0)}@+{[0,0]+
(5,-4)}}^{#1}}
\newcommand{\dloopr}[1]{\ar@'{@+{[0,0]+(-4,-5)}@+{[0,0]+(0,-10)}@+{[0,
0]+(4,-5)}}_{#1}}
\newcommand{\dloopd}[1]{\ar@'{@+{[0,0]+(-5,4)}@+{[0,0]+(-10,0)}@+{[0,0
]+(-5,-4)}}_{#1}}
\newcommand{\luloop}[1]{\ar@'{@+{[0,0]+(-8,2)}@+{[0,0]+(-10,10)}@+{[0,
0]+(2,2)}}^{#1}}
\newcolumntype{L}[1]{>{\raggedright\let\newline\\\arraybackslash\hspace{0pt}}m{#1}}
\newcolumntype{C}[1]{>{\centering\let\newline\\\arraybackslash\hspace{0pt}}m{#1}}
\newcolumntype{R}[1]{>{\raggedleft\let\newline\\\arraybackslash\hspace{0pt}}m{#1}}
\def\K{\mathbb{K}}
\def\T {\mathcal{T}}
\def\A{\mathcal{A}}
\newcommand{\Der}{{\rm Der}}
\newcommand{\chart}{{\rm char}}
\tikzstyle directed=[postaction={decorate,decoration={markings,
    mark=at position .65 with {\arrow{stealth}}}}]
\tikzstyle reverse directed=[postaction={decorate,decoration={markings,
    mark=at position .65 with {\arrowreversed{stealth};}}}]
\begin{document}

\subjclass[2010]{17A36, 05C25, 17D92}
\keywords{Genetic Algebra, Evolution Algebra, Derivation, Graph, Twin Partition} 

\author[Yolanda Cabrera Casado]{Yolanda Cabrera Casado}
\address{Yolanda Cabrera Casado:  Departamento de \'Algebra Geometr\'{\i}a y Topolog\'{\i}a, Fa\-cultad de Ciencias, Universidad de M\'alaga, Campus de Teatinos s/n. 29071 M\'alaga, Spain.}
\email{yolandacc@uma.es}

\author[Paula Cadavid]{Paula Cadavid}
\address{Paula Cadavid: Universidade Federal do ABC, Avenida dos Estados, 5001-Bangu-Santo Andr\'e - SP, Brazil.}
\email{pacadavid@gmail.com}

\author[Mary Luz Rodi\~no Montoya]{Mary Luz Rodi\~no Montoya}
\address{Mary Luz Rodi\~no Montoya: Instituto de Matem\'aticas,  Universidad de Antioquia, Calle 67 N$^{\circ}$ 53-108, Medell\'in, Colombia.}
\email{mary.rodino@udea.edu.co }

\author[Pablo M. Rodriguez ]{Pablo M. Rodriguez}
\address{Pablo M. Rodriguez: Centro de Ci\^encias Exatas e da Natureza, Universidade Federal de Pernambuco, Av. Prof. Moraes Rego, 1235 - Cidade Universit\'aria - Recife - PE, Brazil.}
\email{pablo@de.ufpe.br}

\thanks{
Part of this work was carried out during a visit of Y.C., M.L.R and P.M.R. at the Universidade Federal do ABC - UFABC in Santo Andr\'e, SP, Brazil. They are grateful for their hospitality and support. The authors also thank the Funda\c{c}\~ao Getulio Vargas - FGV, Rio de Janeiro, RJ, Brazil, host organization of the ILAS 2019 conference, for their hospitality. This work was supported by Funda\c{c}\~ao de Amparo \`a Pesquisa do Estado de S\~ao Paulo - FAPESP (Grants 2016/11648-0, 2017/10555-0, 2018/06925-0), and Conselho Nacional de Desenvolvimento Cient\'ifico e Tecnol\'ogico - CNPq (Grant 304676/2016-0).
\newline
}

\title[]{On the characterization of the space of derivations\\ in evolution algebras}

\begin{abstract}
We study the space of derivations for some finite-dimensional evolution algebras, depending on the twin partition of an associated directed graph. For evolution algebras with a twin-free associated graph we prove that the space of derivations is zero. For the remaining families of evolution algebras we obtain sufficient conditions under which the study of such a space can be simplified. We accomplish this task by identifying the null entries of the respective derivation matrix. Our results suggest how strongly the associated graph's structure impacts in the characterization of derivations for a given evolution algebra. Therefore our approach constitutes an alternative to the recent developments in the research of this subject. As an illustration of the applicability of our results we provide some examples and we exhibit the classification of the derivations for non-degenerate irreducible $3$-dimensional evolution algebras.
\end{abstract}
\maketitle


\section{Introduction}

The evolution algebras became an interesting class of non-associative genetic algebras because of their many applications and connections to other areas of mathematics. The first reference of a Theory of Evolution Algebras is due to Tian and Vojtechovsky, see \cite{tv}, who in 2006 state the first properties for these mathematical structures. Further on, Tian, in his seminal work \cite{tian}, gave an interesting correspondence between this and other subjects of mathematics like, for example, the Theory of Discrete-time Markov Chains. We refer the reader to \cite{YMV,YMV2,PMP,PMP2,PMP3,camacho/gomez/omirov/turdibaev/2013,COT,cardoso,casas/ladra/omirov/rozitov/2013,casas/ladra/rozitov/2011,Elduque/Labra/2015,Elduque/Labra/2019,falcon,paniello,tian2} and references therein for an overview of recent results, applications, and interesting open problems. 

\smallskip
In order to state the first definitions let $\Lambda:=\{1,\ldots,n\}$. An $n$-dimensional $\K$-algebra is called an \emph{evolution algebra} if it  admits a basis $B=\{e_i \}_{i \in \Lambda}$ such that $e_{i} \cdot e_{j}=0$ whenever $i \neq j$. A basis with this property is known as \emph{natural basis}. The scalars $\omega_{ij} \in \K$ such that \[e_i^2=e_{i}\cdot e_{i} =\displaystyle\sum_{k \in \Lambda} \omega_{ik} e_k\] are called the \emph{structure constants} of $\A$ relative to $B$ and the matrix $M_{B}=(\omega_{ik})$ is called the \emph{structure matrix} of $\A$ relative to $B$. Our purpose is to study the derivations of an evolution algebra. Given an (evolution) algebra $\A$, a  \emph{derivation} of $\A$ is a linear map $d: \A \rightarrow  \A$ such that 
\[d(u \cdot v)= d(u) \cdot  v + u \cdot d(v),\]
for all $u,v \in \A$. The space of all derivations of the (evolution) algebra $\A$ is denoted by $\Der(\A)$. In \cite{tian} it is observed that a linear map $d$ such that $d(e_i)=\sum_{k=1}^{n} d_{ik}e_k$ is a derivation of the evolution algebra $\A$ if, and only if, it satisfies the following conditions: 

\begin{eqnarray}
\omega_{jk}d_{ij}+ \omega_{ik}d_{ji}=0, &\,\,\, \text{ for }i,j,k\in\Lambda\text{ such that }i\neq j,\label{eq:der1}\\
\sum_{k=1}^{n} \omega_{ik}d_{kj}=2\omega_{ij}d_{ii}, &\,\,\, \text { for }i,j\in\Lambda,\label{eq:der2}
\end{eqnarray}

\noindent
where $M_B=(\omega_{ij})$ is the structure matrix of $\A$ relative to a natural basis $B$. Therefore,  \eqref{eq:der1} and \eqref{eq:der2} are the starting point whether one want to obtain a description of the space of derivations of an evolution algebra $\A$. We point out that such a space is a Lie algebra which may be used as a tool for studying the structure of the original algebra. For some genetic algebras, the space of all the derivations has already been described in \cite{costa1,costa2,gonshor,gonzales,holgate,Mukhamedov/Qaralleh/2014,peresi}. In the particular case of an evolution algebra, a complete characterization of such space is still an open question. We recommend to the reader the references \cite{Alsarayreh/Qaralleh/Ahmad/2017,PMP3, COT,cardoso, Elduque/Labra/2019,Mukhamedov/Khakimov/Omirov/Qaralleh/2019} for a good idea of the existing results about this topic.

\smallskip
In \cite{COT} the authors prove that the space of derivations of $n$-dimensional complex evolution algebras with non-singular matrices is zero, and they describe the space of derivations of evolution algebras with matrices of rank $n-1$. In \cite{Elduque/Labra/2019} the authors extend the description of the derivations of evolution algebras with non-singular matrices to the case of fields with any characteristic. Although the approaches considered in \cite{Alsarayreh/Qaralleh/Ahmad/2017,PMP3,cardoso} are different, they provide a useful contribution to the field. While \cite{cardoso} gives a complete characterization for the space of derivation on two-dimensional evolution algebras, and \cite{Alsarayreh/Qaralleh/Ahmad/2017,Mukhamedov/Khakimov/Omirov/Qaralleh/2019} consider the case of three-dimensional solvable and finito-dimensional nilpotent evolution algebras,  \cite{PMP3} do it for the case of evolution algebras associated to graphs. We point out that the later includes examples of algebras with singular matrices with any rank so they arguments can be seen as an alternative to the ones developed by \cite{COT,Elduque/Labra/2019}. Indeed the approach stated in \cite{PMP3} rely only on the structural properties of the considered graph. In this work we extend this approach to study the derivations of a given evolution algebra. We accomplish this task by exploring the fact that, as pointed by \cite{Elduque/Labra/2015}, any evolution algebra induces a directed graph. Moreover, we illustrate the applicability of our approach by characterizing the derivations of non-degenerate irreducible $3$-dimensional evolution algebras.

\smallskip
The paper is organized as follows. In Section 2 we review some of the standard notation of Evolution Algebras and Graph Theory, we state our main results, and we illustrate their applicability through examples. Section 3 is devoted to the statement of auxiliary results and the proofs of our main theorems.

\section{Main results}

\subsection{Preliminary definitions and notation}
 
In order to present our results we start with basic definitions and notation for directed graphs appearing in \cite{YMV}. A \emph{directed graph} is a $4$-tuple $E=(E^0, E^1, s_E, r_E)$ where $E^0$, $E^1$ are sets and $s_E, r_E: E^1 \to E^0$ are maps. The elements of $E^0$ are called the \emph{vertices} of $E$ and the elements of $E^1$ the \emph{arrows} or \emph{directed edges} of $E$. For
$f\in E^1$ the vertices $r(f)$ and $s(f)$ are called the \emph{range} and the \emph{source} of $f$, respectively. If $E^0$ and $E^1$ are both finite we say  that $E$ is \emph{finite}. A $v\in E^{0}$ is called \emph{sink} if it verifying that $s(f) \neq v$, for every $f\in E^1$.  
A \emph{path} or a \emph{path from $s(f_1)$ to $r(f_n)$} in  $E$, $\mu$, is a finite sequence of arrows $\mu=f_1\dots f_n$
such that $r(f_i)=s(f_{i+1})$ for $i\in\{1,\dots,(n-1)\}$. In this case we say that $n$ is the \emph{length} of the path $\mu$ and denote by $\mu^0$ the set of its vertices, i.e.,
$\mu^0:=\{s(f_1),r(f_1),\dots,r(f_n)\}$. Let $\mu = f_1 \dots f_n$ be a path in $E$. If  $ \vert\mu\vert =n\geq 1$, and if
$v=s(\mu)=r(\mu)$, then $\mu$ is called a \emph{closed path based at $v$}.
 If
$\mu = f_1 \dots f_n$ is a closed path based at $v$ and $s(f_i)\neq s(f_j)$ for
every $i\neq j$, then $\mu$ is called a \emph{cycle based at} $v$ or simply a \emph{cycle}. A cycle of length $1$ will be said to be a \emph{loop}. Given a finite graph $E$, its \emph{adjacency matrix} is the matrix $Ad_{E}=(a_{ij}) \in {\mathbb Z}^{(E^0\times E^0)}$
where $a_{ij}$ is the number of arrows from $i$ to $j$.

\smallskip
There are several ways to associate a graph to an evolution algebra (see \cite{YMV, Elduque/Labra/2015}). We consider the directed graph described in \cite{YMV} as follows. Given a natural basis $B=\{e_i \}_{i\in \Lambda}$ of an evolution algebra $\A$ and its structure matrix $M_B=(\omega_{ij})\in  {\rm M}_\Lambda(\mathbb K)$, consider the matrix
$P=(a_{ij})\in  {\rm M}_\Lambda(\mathbb K)$ such that $a_{ij}=0$ if $\omega_{ij}=0$ and $a_{ij}=1$ if $\omega_{ij}\neq 0$. The \emph{graph associated to the evolution algebra} $\A$ (relative to the basis $B$), denoted by $E_\A^{B}$ (or simply by $E$ if the algebra $\A$ and the basis $B$ are understood) is the directed graph whose adjacency matrix is given by $P=(a_{ij})$. 

\begin{exa}\label{EXA:EA1} \rm
Let $\A$ be an evolution algebra with natural basis $B=\{e_1,e_2,e_3\}$ such that 

$$
\begin{array}{ccl}
e_1^2 &=&2e_1 + e_2,\\[.2cm] 
e_2^2&=&-e_ 1 +3 e_3,\\[.2cm]
e_3^2&=&3 e_3. 
\end{array}
$$

The associated directed graph is given in Figure \ref{FIG:EA1}.

\begin{figure}[h!]
    \centering
    
    \begin{tikzpicture}

\draw [thick] (0,0) circle (7pt);
\draw (0,0) node[font=\footnotesize] {1};
\draw [thick] (2,0) circle (7pt);
\draw (2,0) node[font=\footnotesize] {2};
\draw [thick] (4,0) circle (7pt);
\draw (4,0) node[font=\footnotesize] {3};


\draw [thick, directed] (0,0.25) to [bend left=45] (2,0.25);
\draw [thick, directed] (0,0.25) to [out=140,in=220,looseness=8] (0,-0.25);
\draw [thick, directed] (2,-0.25) to [bend left=45] (0,-0.25);
\draw [thick, directed] (2,-0.25) to [bend left=315] (4,-0.25);
\draw [thick, directed] (4,0.25) to [out=40,in=320,looseness=8] (4,-0.25);

\end{tikzpicture}
    
    \caption{Associated directed graph of the evolution algebra of Example \ref{EXA:EA1}.}
    \label{FIG:EA1}
\end{figure}
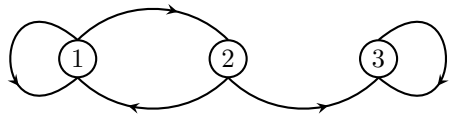

\end{exa}

The \emph{underlying graph} of the directed graph $G$ is the a graph which is obtained from $G$ after forgetting the orientation of their arrows. A directed graph $G$ is \emph{connected} if their underlying graph is connected. On the other hand, an evolution algebra $\A$ is \emph{reducible} if there exist two nonzero ideals $I$ and $J$ such that $\A=I \oplus J$. In other case, it is called \emph{irreducible}. An evolution algebra $\A$ is \emph{non-degenerate} if there is a natural basis $B$ such that $e_i^2 \neq 0$ for all $e_i \in B$. In general, the connectivity of the graph associated to an evolution algebra depends on the chosen natural basis, as \cite[Example 2.5]{Elduque/Labra/2015} shows.  It is well-known by \cite[Lemma 2.7]{Elduque/Labra/2015} the fact that a given algebra is non-degenerate does not depend on the chosen natural basis. Connectedness is related to irreducibly by \cite[Proposition 2.8]{Elduque/Labra/2015} which states that a finite non-degenerate evolution algebra $\A$  with a natural bases $B$ is irreducible if, and only if, the associated graph $E_\A^{B}$ is connected.

The following definitions can be found in \cite[Definition 3.1]{YMV}, but we include them here for the sake of completeness. Let $B=\{e_{i} \}_{i\in \Lambda}$ and $M_B=(\omega_{ij})$ be, respectively, a natural basis and the structure matrix  relative to $B$ of an evolution algebra $\A$. For any $i_{0}\in \Lambda$, the
\emph{first-generation descendants} of $i_{0}$ is the set given by $D^{1}(i_{0}):=\left\{k\in \Lambda \, \colon \, \omega _{i_{0}k}\neq 0\right\}.$ In addition, given a subset $U\subseteq \Lambda $, we let $D^{1}(U):=\{j\in \Lambda : j\in D^{1}(i) \text{ for some }i\in U\}$. If $E=(E^0,E^1,r,s)$ is the graph associated to $\A$, observe that $$D^1(i_0)= \{ k \in \Lambda: \exists f\in E^1 \, \vert \, s(f)=v_{i_0},\, r(f)=v_k\},$$

\noindent
with $v_{i_0},\, v_k \in E^0$. 

\smallskip
By analogy with Graph Theory we define the following notions. Let $\A$ be an evolution algebra with natural basis $B$ and let $i,j \in \Lambda$. We say that $i$ and $j$ are \emph{twins relative to $B$} if $D^1(i)=D^1(j)$. If $\A$ has no twins relative to $B$, that is, if
$$ \text{ for all } \, i,j \in \Lambda, \,i\neq j,\, D^1(i) \neq D^1(j), $$
\noindent
then we say that $\A$ is \emph{twin-free relative to} $B$ (see Example \ref{EXA:EA1}). We notice that by defining the relation $\sim_{t_B}$ on the set of indices $\Lambda$ by $i\sim_{t_B} j$ whether $i$ and $j$ are twins relative to $B$, then $\sim_{t_B}$ is an equivalence relation. An equivalence class of the twin relation $\sim_{t_B}$ is referred to as a {\it twin class relative to $B$}. In other words, the twin class of a index $i$ is the set $\{j\in \Lambda :i \sim_{t_B} j\}$. The set of all twin classes relative to $B$ of $\Lambda$ is denoted by $\Pi_B(\Lambda)$ and it is referred to as the \emph{twin partition relative to $B$} of $\Lambda$.
These definitions depend on the chosen natural basis as the following example shows.
\begin{exa}\label{EXA:EA2}
\rm
Let $\A$ be an evolution algebra with natural basis $B=\{e_1,e_2,e_3\}$ such that 
$$
\begin{array}{ccl}
e_1^2&=&e_2,\\[.2cm]
e_2^2&=&e_3,\\[.2cm] 
e_3^2&=&e_3. 
\end{array}
$$

In this natural basis we have $D^1(1)=\{ 2 \}$, and $D^1(2)=D^1(3)=\{ 3 \}$. Then $\Pi_B(\Lambda)=\{\{1\}, \{2,3\} \}$. Now consider a new natural basis given by $B'=\{f_1,f_2,f_3\}$ with $f_1=e_1$, $f_2=e_2-e_3$ and $f_3=e_2+e_3$. In the basis $B'$ we have $D^1(1)=D^1(2)=D^1(3)=\{ 2,3 \}$ so $\Pi_{B'}(\Lambda)=\{\{1,2,3\}\}$. In Figure \ref{FIG:EA2} we represent the associated graphs for this evolution algebra.

\end{exa}

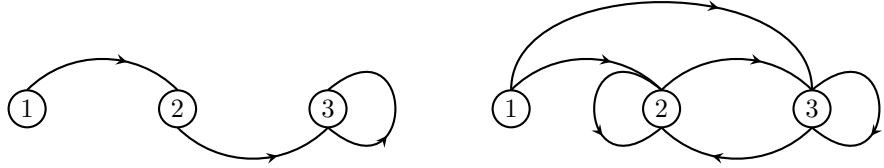
\begin{figure}[h!]
    \centering
    
    \subfigure[][Associated directed graph of the evolution algebra relative to $B$]{    \begin{tikzpicture}

\draw [thick] (0,0) circle (7pt);
\draw (0,0) node[font=\footnotesize] {1};
\draw [thick] (2,0) circle (7pt);
\draw (2,0) node[font=\footnotesize] {2};
\draw [thick] (4,0) circle (7pt);
\draw (4,0) node[font=\footnotesize] {3};


\draw [thick, directed] (0,0.25) to [bend left=45] (2,0.25);
\draw [thick, directed] (2,-0.25) to [bend left=315] (4,-0.25);
\draw [thick, reverse directed] (4,0.25) to [out=40,in=320,looseness=8] (4,-0.25);

\end{tikzpicture}}\qquad\subfigure[][Associated directed graph of the evolution algebra relative to $B'$]{    \begin{tikzpicture}

\draw [thick] (0,0) circle (7pt);
\draw (0,0) node[font=\footnotesize] {1};
\draw [thick] (2,0) circle (7pt);
\draw (2,0) node[font=\footnotesize] {2};
\draw [thick] (4,0) circle (7pt);
\draw (4,0) node[font=\footnotesize] {3};


\draw [thick, directed] (0,0.25) to [bend left=45] (2,0.25);
\draw [thick, directed] (0,0.25) to [bend left=90] (4,0.25);
\draw [thick, directed] (2,0.25) to [bend left=45] (4,0.25);
\draw [thick, directed] (2,0.25) to [out=140,in=220,looseness=8] (2,-0.25);
\draw [thick, directed] (4,-0.25) to [bend left=45] (2,-0.25);
\draw [thick, directed] (4,0.25) to [out=40,in=320,looseness=8] (4,-0.25);

\end{tikzpicture}}
    
    \caption{Associated directed graphs of the evolution algebra of Example \ref{EXA:EA2} according to the chosen basis.}
    \label{FIG:EA2}
\end{figure}

  \subsection{Characterization of the space of derivations}
We focus on non-degenerate irreducible evolution algebras with finite dimension. In other words, we consider evolution algebras such that their associated graph is finite, connected and without sinks. As mentioned before, our aim is to study the space of derivations. In the rest of the paper we shall assume that $\A$ is a $n$-dimensional $\mathbb{K}$-evolution algebra with $\chart(\K)=0$. We denote the associated graph to $\A$ by $E=(E^0,E^1,r,s)$, so $\vert E^0 \vert=n $. We suppose that $d \in \Der(\A)$ is such that 
\begin{equation}\label{eq:derexp}
d(e_i)=\sum_{k=1}^{n}d_{ik}e_k,
\end{equation} 

\noindent
with $d_{ik} \in \K$ for every $i,k \in \Lambda$. 

\begin{prop}\label{prop:1}Let $\A$ be an $n$-dimensional evolution algebra with a natural basis $B=\{e_i\}_{i\in \Lambda}$ and structure matrix $(\omega_{ij})$. If $d\in \Der\left(\A\right)$, then $d$ satisfies the following conditions:
\begin{enumerate}[label=(\roman*)]
\item If $i,j\in \Lambda$, $i\neq j$, and $ D^1(i) \cap D^1(j) \neq \emptyset$ then 
$$d_{ij}=-\frac{\omega_{ik}}{\omega_{jk}}d_{ji},  \text{ for }  k \in D^1(i) \cap D^1(j).$$
\item If $i,j\in \Lambda$, $i\neq j$, and $ D^1(i) \cap  \left(D^1(j) \right)^c \neq \emptyset$ then $d_{ji}=0$.
\item If $\A$ is non-degenerate then for every $i,j\in \Lambda$, $i\neq j$ with $D^1(i) \cap D^1(j)  = \emptyset $ we have $d_{ij}=d_{ji}=0$.
\item For any $i\in \Lambda$, we have
$$
\sum_{k\in D^1(i)} \omega_{ik}d_{kj}=\left\{
\begin{array}{cl}
0,&\text{ if }j\notin D^1(i),\\[.2cm]
2\omega_{ij}d_{ii},&\text{ if }j\in  D^{1}(i).
\end{array}\right.
$$ 
\end{enumerate}

\end{prop}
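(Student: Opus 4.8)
The plan is to derive all four statements directly from the two characterizing identities \eqref{eq:der1} and \eqref{eq:der2} quoted above, using throughout the elementary dictionary between the combinatorial data and the structure constants, namely that $k\in D^1(i)$ if and only if $\omega_{ik}\neq 0$. Thus $D^1(i)\cap D^1(j)$ is exactly the set of indices $k$ for which both $\omega_{ik}$ and $\omega_{jk}$ are nonzero, while $D^1(i)\cap\bigl(D^1(j)\bigr)^c$ is the set of $k$ with $\omega_{ik}\neq 0$ and $\omega_{jk}=0$. I would prove (i) and (ii) first from \eqref{eq:der1}, then deduce (iii) by combining (ii) with non-degeneracy, and finally read off (iv) from \eqref{eq:der2}.

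For (i), fix $k\in D^1(i)\cap D^1(j)$, so that $\omega_{jk}\neq 0$. Equation \eqref{eq:der1} for this triple $(i,j,k)$ reads $\omega_{jk}d_{ij}+\omega_{ik}d_{ji}=0$, and dividing by $\omega_{jk}$ gives the claimed expression $d_{ij}=-(\omega_{ik}/\omega_{jk})\,d_{ji}$. For (ii), pick $k\in D^1(i)\cap\bigl(D^1(j)\bigr)^c$, so $\omega_{ik}\neq 0$ while $\omega_{jk}=0$; then the same equation \eqref{eq:der1} collapses to $\omega_{ik}d_{ji}=0$, forcing $d_{ji}=0$. The only point requiring care here is the index bookkeeping in \eqref{eq:der1}: one must track which of $\omega_{ik},\omega_{jk}$ multiplies which of $d_{ij},d_{ji}$, so as to land on $d_{ji}$ rather than $d_{ij}$.

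For (iii), the hypothesis that $\A$ is non-degenerate means $e_i^2\neq 0$, hence $D^1(i)\neq\emptyset$, for every $i$. Combined with $D^1(i)\cap D^1(j)=\emptyset$, this yields $D^1(i)\subseteq\bigl(D^1(j)\bigr)^c$, so $D^1(i)\cap\bigl(D^1(j)\bigr)^c=D^1(i)\neq\emptyset$ and (ii) gives $d_{ji}=0$; exchanging the roles of $i$ and $j$ (using $D^1(j)\neq\emptyset$) gives $d_{ij}=0$ as well. Finally, (iv) is just a rewriting of \eqref{eq:der2}: since $\omega_{ik}=0$ whenever $k\notin D^1(i)$, the full sum $\sum_{k=1}^n\omega_{ik}d_{kj}$ equals $\sum_{k\in D^1(i)}\omega_{ik}d_{kj}$, and the right-hand side $2\omega_{ij}d_{ii}$ vanishes precisely when $\omega_{ij}=0$, i.e. when $j\notin D^1(i)$, yielding the two cases. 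I do not expect a genuine obstacle: each item is a short deduction, and the real content is simply the translation of the graph-theoretic language of twins and descendants into vanishing/non-vanishing of the $\omega_{ij}$; the subtle spot is remembering that (iii) truly needs non-degeneracy, as its conclusion would otherwise fail for an index with $D^1(i)=\emptyset$.
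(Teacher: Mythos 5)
Your proposal is correct and follows essentially the same route as the paper: (i) and (ii) are read off from equation \eqref{eq:der1} using the dictionary $k\in D^1(i)\Leftrightarrow\omega_{ik}\neq 0$, (iii) combines (ii) with non-degeneracy to get $D^1(i)\cap(D^1(j))^c=D^1(i)\neq\emptyset$, and (iv) is a restriction of the sum in \eqref{eq:der2} to the nonzero terms. Your explicit remark that non-degeneracy is needed to guarantee $D^1(i)\neq\emptyset$ in (iii) makes precise a step the paper leaves implicit.
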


\begin{proof}
In order to prove (i) consider $i,j \in \Lambda$, with $i\neq j$, such that $D^1(i) \cap D^1(j)  \neq \emptyset$, and let $k\in D^1(i) \cap D^1(j) $. Then $\omega_{ik}\not=0$ and   $\omega_{jk}\not=0$. Therefore condition \eqref{eq:der1} implies
$$d_{ij}=-\frac{\omega_{ik}}{\omega_{jk}}d_{ji}.$$
Analogously, assume now that for some $i,j\in \Lambda$ we have $D^1(i) \cap  \left(D^1(j) \right)^c  \neq \emptyset$, and let $k\in D^1(i) \cap  \left(D^1(j) \right)^c $. 
This means that $\omega_{ik}\not=0$ while $\omega_{jk}=0$. This implies, by \eqref{eq:der1}, that $d_{ji}=0$ and the proof of (ii) is completed. Item (iii) is a consequence of (ii) and $\A$ is non-degenerate since $D^1(i) \cap D^1(j) =\emptyset$ implies $ D^1(i) \cap  \left(D^1(j) \right)^c\neq \emptyset$ and  $ D^1(j) \cap  \left(D^1(i) \right)^c\neq \emptyset$. Finally, item (iv) may be obtained by observing in \eqref{eq:der2} that for any $i\in \Lambda$, $\omega_{ik}\not=0$ if and only if,  $k\in D^1(i).$

\end{proof}

\begin{corollary}
Let $\A$ be an $n$-dimensional evolution algebra with a natural basis $B=\{e_i\}_{i\in \Lambda}$. Let $d\in \Der\left(\A\right)$. If $d_{ij}=0$ and $i \sim_{t_B}j$ then $d_{ji}=0$.
\end{corollary}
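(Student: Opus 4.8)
The plan is to reduce everything to Proposition \ref{prop:1}(i), which already records the precise relation between the symmetric off-diagonal entries $d_{ij}$ and $d_{ji}$ whenever the first-generation descendant sets of $i$ and $j$ meet. First I would dispose of the trivial case $i=j$: there the hypothesis reads $d_{ii}=0$ and the conclusion is the very same statement, so nothing is to prove. Hence I may assume $i\neq j$.

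Next I would translate the twin hypothesis. Since $i\sim_{t_B}j$ means $D^1(i)=D^1(j)$ by definition, we have $D^1(i)\cap D^1(j)=D^1(i)=D^1(j)$. Under the standing hypothesis of this subsection---namely that the associated graph of $\A$ has no sinks (equivalently, $\A$ is non-degenerate, so $e_i^2\neq 0$ for every $i$)---this common set is nonempty. Fix any $k\in D^1(i)\cap D^1(j)$; then by the definition of first-generation descendants both $\omega_{ik}\neq 0$ and $\omega_{jk}\neq 0$.

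Now I would invoke Proposition \ref{prop:1}(i), which gives
\[
d_{ij}=-\frac{\omega_{ik}}{\omega_{jk}}\,d_{ji}.
\]
Substituting the hypothesis $d_{ij}=0$ and using that the scalar $\omega_{ik}/\omega_{jk}$ is a ratio of two nonzero elements of $\K$ and hence nonzero, one immediately obtains $d_{ji}=0$, which is the desired conclusion.

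The only delicate point---and the step I expect to be the real obstacle---is guaranteeing that the common descendant set $D^1(i)=D^1(j)$ is nonempty, for otherwise Proposition \ref{prop:1}(i) is vacuous and condition \eqref{eq:der1} imposes no relation between $d_{ij}$ and $d_{ji}$. Indeed, were $i$ and $j$ twin sinks (so $e_i^2=e_j^2=0$), every linear map would be a derivation on the span of $e_i,e_j$ and the implication would fail. This is precisely why the non-degeneracy (no-sinks) assumption in force throughout this subsection is essential, and I would make that dependence explicit rather than leave it tacit.
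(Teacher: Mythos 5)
Your proof is correct and follows exactly the paper's route: the paper's own argument is simply the one-line remark that the claim is ``straightforward from Proposition \ref{prop:1}(i)'', which is precisely the computation you spell out. Your additional observation that one must know $D^1(i)=D^1(j)\neq\emptyset$ (guaranteed by the standing no-sinks/non-degeneracy assumption of the section) is a genuine point that the paper leaves tacit, and it is right to make it explicit.
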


\begin{proof} \label{dij0}
The proof is straightforward from Proposition \ref{prop:1}(i).
\end{proof}

\smallskip

\begin{lemma}\label{lema:1} Let $\A$ be a non-degenerate $n$-dimensional evolution algebra with a natural basis $B=\{e_i\}_{i\in \Lambda}$ and structure matrix $M_B=(\omega_{ij})$. Let $d\in \Der\left(\A\right)$. If $d_{ij}=d_{ji}=0$ for any $i,j\in \Lambda$, $i\neq j$, then $d_{ii}=0$ for any $i\in \Lambda$. 
\end{lemma}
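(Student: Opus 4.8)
The plan is to substitute the hypothesis $d_{ij}=d_{ji}=0$ (for all $i\neq j$) into the derivation identity \eqref{eq:der2} and then exploit the combinatorial structure of the associated graph $E$. First I would fix $i,j\in\Lambda$ with $j\in D^1(i)$, that is $\omega_{ij}\neq 0$, and look at $\sum_{k}\omega_{ik}d_{kj}=2\omega_{ij}d_{ii}$. Since $d_{kj}=0$ for every $k\neq j$, the only surviving term of the sum is the one with $k=j$, so the identity collapses to $\omega_{ij}d_{jj}=2\omega_{ij}d_{ii}$. Dividing by $\omega_{ij}\neq 0$ yields the key doubling relation
$$d_{jj}=2\,d_{ii}\qquad\text{whenever } j\in D^1(i).$$
Equivalently, this can be read off directly from Proposition \ref{prop:1}(iv) in the case $j\in D^1(i)$.

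Next I would bring in non-degeneracy. Since $e_i^2\neq 0$ for every $i$ (this holds in the given basis $B$, non-degeneracy being basis-independent), we have $D^1(i)\neq\emptyset$ for all $i$; in graph terms $E$ has no sinks. Together with finiteness of $E^0$, this guarantees that starting from any vertex $i_0$ and repeatedly following an outgoing arrow produces an infinite walk $i_0\to i_1\to i_2\to\cdots$ that must revisit a vertex. Choosing $p<m$ with $i_m=i_p$, the segment $i_p\to i_{p+1}\to\cdots\to i_m=i_p$ is a closed walk of length $L:=m-p\geq 1$. Iterating the doubling relation at each step gives $d_{i_{t+1}i_{t+1}}=2\,d_{i_ti_t}$, hence $d_{i_pi_p}=2^{\,p}\,d_{i_0i_0}$ and, going once around the loop, $d_{i_pi_p}=2^{\,L}\,d_{i_pi_p}$.

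From the last equality I obtain $(2^L-1)\,d_{i_pi_p}=0$, and here the characteristic hypothesis enters: since $\chart(\K)=0$ and $L\geq 1$, the integer $2^L-1$ is a nonzero scalar, so $d_{i_pi_p}=0$. Combining this with $d_{i_pi_p}=2^{\,p}\,d_{i_0i_0}$ and again using $\chart(\K)=0$ to cancel the factor $2^{\,p}$ gives $d_{i_0i_0}=0$. As $i_0\in\Lambda$ was arbitrary, $d_{ii}=0$ for all $i$, which is the claim. Note that connectedness of $E$ is not needed: each vertex independently flows into a cycle, so the argument covers any non-degenerate $\A$, exactly as the lemma is stated.

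The argument is short, and the only genuine obstacle is organizing the graph-theoretic step correctly: one must guarantee that every vertex flows into a directed cycle, which is precisely what the absence of sinks plus finiteness provides. It is worth isolating the two places where $\chart(\K)=0$ is used — inverting $2^L-1$ on the cycle and cancelling $2^{\,p}$ along the walk — since the doubling relation behaves differently in characteristic $2$ and the conclusion need not persist there.
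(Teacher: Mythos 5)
Your proposal is correct and follows essentially the same route as the paper's proof: both derive the doubling relation $d_{jj}=2d_{ii}$ for $j\in D^1(i)$ from Proposition \ref{prop:1}(iv), use the absence of sinks plus finiteness to force every vertex into a directed cycle, kill the diagonal entries on the cycle via $(2^L-1)d=0$ in characteristic zero, and then propagate zeros along the walk back to the starting vertex. Your organization (follow a single walk forward from an arbitrary vertex until it repeats) is a slightly tidier packaging of the paper's argument, which first treats all cycles and then handles off-cycle vertices by paths into them, but the underlying ideas are identical.
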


\begin{proof}
Let $i\in \Lambda$ such that $d_{ii}\neq 0$.  If $j \in D^1(i)$ applying Proposition \ref{prop:1}(iv) we have that  $$ 2\omega_{ij}d_{ii}  = \sum_{k\in D^1(i)} \omega_{ik}d_{k j} = \omega_{ij}d_{jj}.$$ Therefore 
\begin{equation}\label{1eq:lem1}
2d_{ii}=d_{jj}, \text{ for } j \in D^1(i) .
\end{equation}

Now, we will see that a directed graph $E$ that has no sinks contains at least one cycle. Indeed, let $u_1 \in E^0$ an arbitrary vertex. If there exists an arrow $g \in E^1$ such that $s(g)=r(g)=u_1$ then we get the desired result. On the contrary, since $u_1$ is not a sink, there exist a vertex $u_2 \in E^0 \setminus\{u_1\}$ and arrow $f_1$ such that $s(f_1)=u_1$ and $r(f_1)=u_2$. Now, reasoning in the same way with $u_2$, we get that there exist either a cycle of length 1, a cycle of length 2 or there exist a vertex $u_3 \in E^0\setminus\{u_1, u_2\}$ and an arrow $f_2\in E^1$ such that $s(f_2)=u_2$ and $r(f_2)=u_3$. Since the algebra has dimension n, we iterate this process at most $(n-1)$ times and either we have a cycle or we have a path $f_1f_2\ldots f_{n-1}$ that contains n different vertices $\{u_1, \ldots ,u_n\}$. Since $u_n$ is not a sink, necessarily there exist a vertex $u_i \in E^0$ and an arrow $f_n$ such that $s(f_n)=u_n$ and $r(f_n)=u_i$. Therefore, in any case, we get a cycle. 

Let $c_1, \ldots, c_s$ be the cycles of $E$ and $c_1^0, \ldots, c_s^0 $ the set of vertices of such  cycles, that is $c_i^0=\{v_{i_1},\ldots, v_{i_{t_i}}\}$ with $v_{i_j} \in E^0$ for every $i \in \{1,\ldots,s\}$ and $j \in \{1,\ldots,t_i\}$. For $j \in \{1,\ldots,(t_{i}-1)\}$ and $i \in \{ 1,\ldots, s\}$ we have that ${i_{j+1}} \in D^1({i_j})$ and $i_{1} \in  D^1{(i_{t_i})}$  then 
by \eqref{1eq:lem1} 
 $$2d_{i_ji_j}=d_{i_{j+1}i_{j+1}}\,\,\,\text{ and }\,\,\, 2d_{i_ji_j}=d_{i_{j+1}i_{j+1}}.$$
 Therefore, $d_{i_ji_j}=0$ for all $j \in \{1, \ldots,t_i\}$ and $i \in \{1,\ldots,s \}$. Suppose that there exists a vertex $v_k$ such that $v_k \in E^0 \setminus (\cup_{i=1}^s c_i^0)$. Since $v_k$ is not a sink there exist an $i_0 \in \{1,\ldots,s\}$ and $h \in \{1,\ldots,t_{i_0}\}$ with $v_{i_{0_h}}\in c^0_{i_0}$ and a path $\mu$ with $\mu^0=\{v_k,v_{k_1},\ldots,v_{k_l},v_{i_{0_h}}\}$ such that $v_{k_i} \in E^0 \setminus (\cup_{i=1}^s c_i^0)$  for  all $i\in \{1,\ldots,l \}$. Since $d_{i_{0_h}i_{0_h}}=0$ then $d_{k_ik_i}=0$ for  all $i\in \{1,\ldots,l \}$ and therefore $d_{kk}=0$.


\end{proof}

\begin{lemma} \label{lema:2} Let $\A$ be a non-degenerate $n$-dimensional evolution algebra with a natural basis $B=\{e_i\}_{i\in \Lambda}$ and structure matrix $M_B=(\omega_{ij})$. Let $d\in \Der\left(\A\right)$. If $d_{ij}\not=0$ for $i,j \in \Lambda$, $i\not=j$ then $i \sim_{t_B}j$.
\end{lemma}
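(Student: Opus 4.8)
The plan is to establish the claim directly from Proposition~\ref{prop:1}, by showing that the hypothesis $d_{ij}\neq 0$ forces the two inclusions $D^1(i)\subseteq D^1(j)$ and $D^1(j)\subseteq D^1(i)$, whence $D^1(i)=D^1(j)$, i.e. $i\sim_{t_B}j$ by the very definition of the twin relation. Since $\A$ is non-degenerate, all three items (i)--(iii) of Proposition~\ref{prop:1} are available, and each controls a different configuration of the descendant sets; the whole argument therefore reduces to feeding $d_{ij}\neq 0$ into the right items in the right order.

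First I would read item (iii) contrapositively: because $\A$ is non-degenerate, $D^1(i)\cap D^1(j)=\emptyset$ would force $d_{ij}=d_{ji}=0$, so from $d_{ij}\neq 0$ we conclude $D^1(i)\cap D^1(j)\neq\emptyset$, meaning $i$ and $j$ share at least one common descendant $k$. Next I would use item (i): for such a $k$ we have $d_{ij}=-\frac{\omega_{ik}}{\omega_{jk}}d_{ji}$, and since the coefficient is a nonzero scalar, the non-vanishing of $d_{ij}$ propagates to $d_{ji}\neq 0$. Thus both off-diagonal entries are nonzero, which is exactly what is needed to invoke item (ii) twice.

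The two inclusions then come from item (ii) read contrapositively. Applying (ii) to the pair $(i,j)$ gives: if $D^1(i)\cap(D^1(j))^c\neq\emptyset$ then $d_{ji}=0$; since $d_{ji}\neq 0$, this yields $D^1(i)\subseteq D^1(j)$. Applying (ii) with the roles of $i$ and $j$ exchanged gives: if $D^1(j)\cap(D^1(i))^c\neq\emptyset$ then $d_{ij}=0$; since $d_{ij}\neq 0$, this yields $D^1(j)\subseteq D^1(i)$. Combining the two inclusions gives $D^1(i)=D^1(j)$, that is, $i\sim_{t_B}j$.

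I do not expect a genuine obstacle here: the content is essentially bookkeeping of which item of Proposition~\ref{prop:1} delivers which conclusion. The only point requiring care is that $d_{ij}\neq 0$ by itself yields only the single inclusion $D^1(j)\subseteq D^1(i)$ (via the exchanged form of (ii)); to obtain the reverse inclusion one must first upgrade $d_{ij}\neq 0$ to $d_{ji}\neq 0$, and this is precisely where non-degeneracy enters, through item (iii) to guarantee a common descendant and then item (i) to transfer non-vanishing from $d_{ij}$ to $d_{ji}$.
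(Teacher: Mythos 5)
Your proposal is correct and follows essentially the same route as the paper's proof: Proposition~\ref{prop:1}(iii) yields $D^1(i)\cap D^1(j)\neq\emptyset$, item (i) upgrades $d_{ij}\neq 0$ to $d_{ji}\neq 0$, and item (ii) applied in both directions forces $D^1(i)\cap(D^1(j))^c=\emptyset$ and $D^1(j)\cap(D^1(i))^c=\emptyset$, hence $D^1(i)=D^1(j)$. Your explicit remark about which nonvanishing entry delivers which inclusion is a useful clarification of a point the paper leaves implicit.
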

\begin{proof} 
Let $i,j \in V$, $i\not=j$ such that $d_{ij}\not=0$. By Proposition \ref{prop:1} (iii) we have that $ D^1(i) \cap D^1(j) \not=\emptyset $ then by Proposition \ref{prop:1} (i) we have that 
$$d_{ji}=-{\frac{\omega_{jk}}{\omega_{ik}}} d_{ij} \neq 0  \text{ for } k\in  D^1(i) \cap D^1(j). $$
Then, by Proposition \ref{prop:1} (ii), 
$$D^1(i) \cap  \left(D^1(j) \right)^c  = \emptyset  \text{ and } D^1(j) \cap  \left(D^{1}(i) \right)^c  = \emptyset . $$
Therefore, $ D^1(i)= D^1(j)$.
\end{proof}

\smallskip
\begin{theorem}\label{thm:main} Let $\A$ be an irreducible non-degenerate $n$-dimensional evolution algebra. Let $B=\{e_i\}_{i\in \Lambda}$ be a natural basis of $\A$ with structure matrix $M_B=(\omega_{ij})$, and let $d\in \Der\left(\A\right)$. If $\A$ is twin-free relative to $B$, then $d=0$.
\end{theorem}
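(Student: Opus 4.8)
The plan is to read the vanishing of the off-diagonal entries of the derivation matrix directly off Lemma \ref{lema:2}, and then to deduce the vanishing of the diagonal entries from Lemma \ref{lema:1}. The whole statement should come out as a clean packaging of these two lemmas, so I do not expect a genuine obstacle; the only care needed is to check that the standing hypotheses supply exactly the assumptions the two lemmas require.

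First I would unwind the twin-free hypothesis: it says precisely that for all $i,j\in\Lambda$ with $i\neq j$ one has $D^1(i)\neq D^1(j)$, i.e.\ $i\not\sim_{t_B}j$. Lemma \ref{lema:2} asserts that if $d_{ij}\neq 0$ for $i\neq j$ then $i\sim_{t_B}j$. Taking the contrapositive and combining it with twin-freeness forces every off-diagonal entry to vanish: $d_{ij}=0$ for all $i\neq j$. This is the heart of the argument and the only place the twin-free assumption is used. In particular we now have $d_{ij}=d_{ji}=0$ for every pair $i\neq j$.

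With all off-diagonal entries zero I would invoke Lemma \ref{lema:1}, whose hypothesis is now exactly satisfied. Since $\A$ is non-degenerate, each $e_i^2\neq 0$ gives $D^1(i)\neq\emptyset$, so the associated graph $E$ has no sinks; this is the property Lemma \ref{lema:1} relies on when it produces a cycle through (or a path into a cycle from) every vertex. The lemma then yields $d_{ii}=0$ for all $i\in\Lambda$. Combining the two steps, $d_{ik}=0$ for all $i,k\in\Lambda$, and therefore $d=0$, as claimed. I would close by noting that, although irreducibility (equivalently connectedness of $E$) is part of the standing setting, the argument itself only consumes non-degeneracy (for the no-sinks property in Lemma \ref{lema:1}) together with the twin-free hypothesis.
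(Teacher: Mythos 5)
Your proof is correct and follows exactly the same route as the paper's: twin-freeness plus Lemma \ref{lema:2} forces all off-diagonal entries $d_{ij}$ ($i\neq j$) to vanish, and Lemma \ref{lema:1} then gives $d_{ii}=0$ for all $i$. Your closing observation that the argument really only consumes non-degeneracy (which yields the no-sinks property used in Lemma \ref{lema:1}) together with twin-freeness is also accurate.
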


\begin{proof}
Let $d\in \Der\left(\A\right)$. Since $\A$ is twin-free relative to $B$ we have that $i\not \sim_{B} j$ for any $i,j\in \Lambda$, with $i\neq j$. This in turns implies, by Lemma \ref{lema:2}, $d_{ij}=d_{ji}=0$, for any $i,j\in \Lambda$, with $i\neq j$. Therefore, by Lemma \ref{lema:1} we conclude $d=0$. 


\end{proof}

The previous theorem give us a first taste of how strongly the associated graph's structure impacts in the existence, or not, of non-zero derivations for a given evolution algebra. The theorem gains in interest if we realize that it provides a simple criterion for the identification of evolution algebras for which the only derivation is the null map. Moreover, it provides an alternative to previous analysis related in the literature about this topic. For the sake of comparison we illustrate the applicability of Theorem \ref{thm:main} with some examples.

\begin{exa}\label{EXA:EA3}
\rm
Consider the evolution algebra $\A$ with natural basis $B=\{e_1,e_2,e_3,e_4,e_5\}$ such that

$$
\begin{array}{ccl}
 e_1^2&=& e_1 + 2\, e_2 + e_4 - e_5,\\[.2cm]
 e_2^2&=& -3\, e_1 + 3 e_3 + 2\, e_5,\\[.2cm]
 e_3^2&=& e_2 + e_4 - 3\, e_5,\\[.2cm]
 e_4^2&=& -2\, e_3 + e_1 + e_4 + e_5,\\[.2cm]
  e_5^2&=& - e_1 + e_2 + 2\, e_3 - 5\, e_4,\\[.2cm]
\end{array}
$$

It is not difficult to see that the associated graph (see Figure \ref{FIG:wheel}(a)) is twin-free relative to $B$. In this case, $D^1(1)=\{1,2,4,5\}$, $D^1(2)=\{1,3,5\}$, $D^1(3)=\{2,4,5\}$, $D^1(4)=\{1,3,4,5\}$ and $D^1(5)=\{1,2,3,4\}$. Therefore, Theorem \ref{thm:main} implies that if $d\in \Der\left(\A\right)$ then $d=0$. Notice that the same result may be obtained as a consequence of \cite[Theorem 2.1]{COT} or \cite[Theorem 4.1(1)]{Elduque/Labra/2019} since the structure matrix is a non-singular matrix. 
\end{exa}

\begin{figure}
    \centering
    
    \subfigure[][Associated graph for the evolution algebra of Example \ref{EXA:EA3}]{

\begin{tikzpicture}[scale=0.93]

\draw [thick] (0,0) circle (7pt);
\draw (0,0) node[font=\footnotesize] {$5$};
\draw [thick] (2,2) circle (7pt);
\draw (2,2) node[font=\footnotesize] {$1$};
\draw [thick, directed] (2,2.25) to [out=90,in=0,looseness=8] (2.25,2);
\draw [thick] (2,-2) circle (7pt);
\draw (2,-2) node[font=\footnotesize] {$2$};
\draw [thick] (-2,-2) circle (7pt);
\draw (-2,-2) node[font=\footnotesize] {$3$};
\draw [thick] (-2,2) circle (7pt);
\draw (-2,2) node[font=\footnotesize] {$4$};
\draw [thick, directed] (-2.25,2) to [out=180,in=90,looseness=8] (-2,2.25);


\draw [thick, directed] (-2,2.25) to [bend left=20] (2,2.25);
\draw [thick, reverse directed] (-1.75,2) to  (1.75,2);

\draw [thick, directed] (-1.75,-2) to (1.75,-2);
\draw [thick, reverse directed] (-2,-2.25) to [bend left=340] (2,-2.25);

\draw [thick, directed] (2.25,2) to [bend left=20] (2.25,-2);
\draw [thick, reverse directed] (2,1.75) to  (2,-1.75);

\draw [thick, directed] (-2.25,2) to [bend left=340] (-2.25,-2);
\draw [thick, reverse directed] (-2,1.75) to  (-2,-1.75);

\draw [thick, directed] (0,0.25) to [bend left=20] (1.75,2);
\draw [thick, directed] (2,1.75) to [bend left=20] (0.25,0);

\draw [thick, directed] (0.25,0) to [bend left=20] (2,-1.75);
\draw [thick, directed] (1.75,-2) to [bend left=20] (0,-0.25);

\draw [thick, directed] (0,-0.25) to [bend left=20] (-1.75,-2);
\draw [thick, directed] (-2,-1.75) to [bend left=20] (-0.25,0);

\draw [thick, directed] (-0.25,0) to [bend left=20] (-2,1.75);
\draw [thick, directed] (-1.75,2) to [bend left=20] (0,0.25);


\end{tikzpicture}}\qquad \qquad\subfigure[][Associated graph for the evolution algebra of Example \ref{EXA:EA4}.]{
\begin{tikzpicture}[scale=0.93]
\draw [thick] (0,0) circle (7pt);
\draw (0,0) node[font=\footnotesize] {$1$};
\draw [thick] (2,0) circle (7pt);
\draw (2,0) node[font=\footnotesize] {$2$};
\draw [thick] (4,0) circle (7pt);
\draw (4,0) node[font=\footnotesize] {$3$};
\draw [thick] (6,0) circle (7pt);
\draw (6,0) node[font=\footnotesize] {$4$};

\draw [thick, directed] (0.25,0) to (1.75,0);
\draw [thick, directed] (2.25,0) to [bend left=30] (5.75,0);
\draw [thick, directed] (3.75,0) to (2.25,0);
\draw [thick, directed] (4.25,0) to (5.75,0);
\draw [thick, directed] (4,-0.25) to [bend left=30] (0,-0.25);
\draw [thick, directed] (6,0.25) to [bend left=300] (0,0.25);

\draw [thick, directed] (1.75,0) to [out=130,in=50,looseness=8] (2.25,0);

\draw [thick, directed] (0,0.25) to [out=140,in=220,looseness=8] (0,-0.25);
\draw [thick, directed] (6,0.25) to [out=40,in=320,looseness=8] (6,-0.25);

\draw (0,-2.5) node[white, font=\footnotesize] {$4$};

\end{tikzpicture}
}

    \caption{Associated twin-free graphs of evolution algebras.}
    \label{FIG:wheel}
\end{figure}
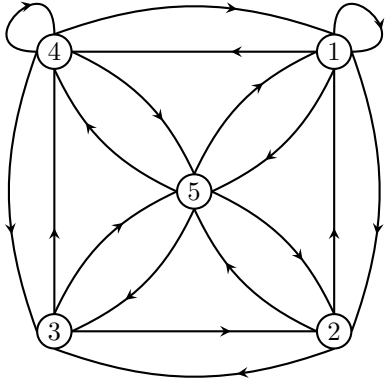
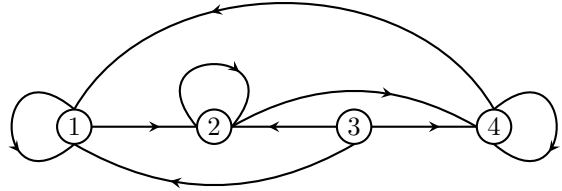

\begin{exa}\label{EXA:EA4} \rm
\rm
Consider the evolution algebra $\A$ with natural basis $B=\{e_1,e_2,e_3,e_4\}$ such that 
$$
\begin{array}{ccl}
e_1^2&=&e_1 + e_2,\\[.2cm]
e_2^2&=&e_2 + e_4,\\[.2cm]
e_3^2&=&e_1 + 2e_2 + e_4,\\[.2cm]
e_4^2&=&e_1 - e_4.
\end{array}
$$

Note that $\A$ is twin-free relative to $B$ since $D^1(1)=\{1,2\}$, $D^1(2)=\{2,4\}$, $D^1(3)=\{1,2,4\}$ and $D^1(4)=\{1,4\}$. Therefore, Theorem \ref{thm:main} implies that $d=0$ for all $d \in \Der\left(\A\right)$. Contrary to what happens in Example \ref{EXA:EA3} we cannot apply results from \cite{COT,Elduque/Labra/2019} because this case is an example of an $n$-dimensional evolution algebra whose structure matrix has rank $n-2$; which is a case not covered by those works. Observe that since $\A$ is a $4$-dimensional evolution algebra, it is not nilpotent, and since there exists $\omega_{ij}\not \in\{0,1\}$ we cannot apply the results from \cite{Alsarayreh/Qaralleh/Ahmad/2017,PMP3,cardoso,Mukhamedov/Khakimov/Omirov/Qaralleh/2019} neither. 
\end{exa}

We emphasize that, given an evolution algebra, the twin partition of its associated graph plays an important role in the description of its space of derivations. Our approach extends the one developed by \cite{PMP3} for the particular class of evolution algebras associated to graphs. Our next task is to explore derivations of those evolution algebras with at least one twin class with more than one vertex.

\smallskip
\begin{theorem}\label{thm:main2}
Let $\A$ be a non-degenerate $n$-dimensional evolution algebra. Let $B=\{e_i\}_{i\in \Lambda}$ be a natural basis of $\A$ with structure matrix $M_B=(\omega_{ij})_{i,j\in \Lambda}$, and let $\Pi_B(\Lambda)=\{\mathcal{T}_1,\mathcal{T}_2,\ldots,\mathcal{T}_k\}$ be the twin partition of $\Lambda$ relative to $B$, with $k\in\{1,\ldots,n\}$. If $d\in \Der\left(\A\right)$, then
$$d=A_1 \oplus A_2 \oplus \cdots \oplus A_k,$$
where $A_i$ is a $|\mathcal{T}_i|$-dimensional square matrix, for $i\in \{1,\ldots,k\}$.
\end{theorem}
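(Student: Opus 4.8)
The plan is to read off the block-diagonal shape directly from Lemma \ref{lema:2}. Fix $d\in\Der(\A)$ and consider its matrix $(d_{ij})_{i,j\in\Lambda}$ relative to the natural basis $B$, where $d(e_i)=\sum_k d_{ik}e_k$ as in \eqref{eq:derexp}. The assertion $d=A_1\oplus A_2\oplus\cdots\oplus A_k$ means precisely that, once $\Lambda$ is enumerated so that the indices belonging to a common twin class are listed consecutively, the matrix $(d_{ij})$ is block diagonal with one block per twin class $\mathcal{T}_1,\ldots,\mathcal{T}_k$. So the whole content of the statement is that every entry $d_{ij}$ sitting outside the diagonal blocks must vanish.

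The key step is the observation that $d_{ij}=0$ whenever $i$ and $j$ lie in different twin classes. Suppose $i\in\mathcal{T}_a$ and $j\in\mathcal{T}_b$ with $a\neq b$; then in particular $i\neq j$ and $i\not\sim_{t_B}j$. Lemma \ref{lema:2} states that $d_{ij}\neq 0$ (for $i\neq j$) forces $i\sim_{t_B}j$; taking its contrapositive gives $d_{ij}=0$. Applying this to every such pair $(i,j)$ annihilates all entries of $(d_{ij})$ that do not belong to a single twin class, which is exactly the off-diagonal-block part of the matrix.

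It remains only to package the surviving entries. For each $m\in\{1,\ldots,k\}$, gather the entries $d_{ij}$ with $i,j\in\mathcal{T}_m$ (this includes the diagonal entries $d_{ii}$ with $i\in\mathcal{T}_m$, which always lie in their own class's block) into a square matrix $A_m$ of size $|\mathcal{T}_m|\times|\mathcal{T}_m|$. Since by the previous step every potentially nonzero entry of $(d_{ij})$ falls inside exactly one such block, we conclude $d=A_1\oplus\cdots\oplus A_k$. The argument is essentially immediate given Lemma \ref{lema:2}; the only point requiring care is the bookkeeping of the reordering, namely fixing an enumeration of $\Lambda$ that lists each twin class consecutively, so that the vanishing of the cross-class entries literally produces a block-diagonal matrix rather than merely a statement about which coordinates are zero. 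I do not anticipate a genuine obstacle here, as non-degeneracy, the hypothesis needed to invoke Lemma \ref{lema:2}, is already assumed.
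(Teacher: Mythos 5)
Your proposal is correct and follows essentially the same route as the paper: both arguments reduce the theorem to showing $d_{ij}=0$ for $i,j$ in different twin classes, obtain this from the contrapositive of Lemma \ref{lema:2}, and then reorder the index set so each twin class is consecutive to realize the block-diagonal form. No gaps.
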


\begin{proof}
It is enough to show that $d_{ij}=0$ provided $i,j$ belong to different twin classes. This is a consequence of Lemma \ref{lema:2} because by taking $i,j\in \Lambda$, $i\neq j$, such that $i\not \sim_{B} j$ then $d_{ij}=d_{ji}=0$. Thus one can identify each matrix with a twin class after a suitable ordering of the generators index set.


\end{proof}

The principal significance of Theorem \ref{thm:main2} is in the assertion that the derivations of an evolution algebra are strongly related to the twin partition of its associated directed graph. It claims that the induced matrix of a derivation is the direct sum of matrices induced by the twin classes of the respective graph. This is one step in the direction of a generalized version of Theorem 2.6 in \cite{PMP3}. However, it is worth pointing out that, while in \cite{PMP3} the twin classes of at most two vertices induce only null matrices, here we could have a non-zero matrix associated to such twin classes.

\begin{exa} \rm
Let $\A$ be an evolution algebra with natural basis $B=\{e_1,e_2,e_3\}$ such that 
$$
\begin{array}{ccl}
e_1^2&=&e_3,\\[.2cm] 
e_2^2&=&-e_3,\\[.2cm]
e_3^2&=&e_1+e_2.
\end{array}
$$

In this natural basis we have that
$D^1(1)=D^1(2)=\{3 \}$ and $D^1(3)=\{ 1,2 \}$. Then $\Pi_B(\Lambda)=\{\T_1,\T_2 \}$, where $\T_1=\{1,2\}$, $\T_2=\{3\}$, and it is not difficult to see that
$$d=\begin{pmatrix} 1 & 3 & 0 \\ 3 & 1 & 0 \\
0 & 0 & 2  \end{pmatrix}$$ 
belongs to $\Der\left(\A\right)$. Therefore this is an example of evolution algebra with twin classes of size at most two and such that $d=A_1 \oplus A_2 $ with $A_1\neq 0$ and $A_2\neq 0$.
\end{exa}

From now on we shall investigate the behavior of the  matrices appearing in Theorem \ref{thm:main2}. 

\subsection{The null entries of a derivation matrix when restricted to a twin class}\label{section:null}

As we shall see later, to know that some elements in the matrix representation of a given derivation are equal to zero allow us to have some information about the remaining elements. Let us start with some general conditions for a given pair $i,j\in \Lambda$ to guarantee $d_{ij}=0$. Then we restrict our attention to those elements inside a given twin class.

\begin{prop}\label{prop:general1}
Let $\A$ be a $n$-dimensional evolution algebra with a natural basis $B=\{e_i\}_{i\in \Lambda}$ and structure matrix $M_B=(\omega_{ij})$. Let $d\in \Der\left(\A\right)$ and consider $i,j\in \Lambda$, with $i\neq j$. Assume that $|D^1(i)|\geq 2$ and that there exist $k, \ell \in D^1(i)$ such that
$$\det
\begin{pmatrix}
\omega_{ik} & \omega_{jk}\\
\omega_{i\ell} & \omega_{j\ell}
\end{pmatrix} \neq 0
.$$

Then $d_{ij}=d_{ji}=0$.
\end{prop}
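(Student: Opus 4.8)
The plan is to produce a homogeneous linear system in the two unknowns $d_{ij}$ and $d_{ji}$ by invoking the first derivation condition \eqref{eq:der1} for two different choices of the third index. First I would fix the pair $i\neq j$ together with the indices $k,\ell\in D^1(i)$ supplied by the hypothesis, and apply \eqref{eq:der1} once to the triple $(i,j,k)$ and once to the triple $(i,j,\ell)$. Both applications are legitimate since $i\neq j$, and the hypothesis $k,\ell\in D^1(i)$ guarantees $\omega_{ik},\omega_{i\ell}\neq 0$ while also forcing $k\neq\ell$ (otherwise the determinant in the statement would vanish).

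These two applications give
$$
\omega_{jk}\,d_{ij}+\omega_{ik}\,d_{ji}=0
\qquad\text{and}\qquad
\omega_{j\ell}\,d_{ij}+\omega_{i\ell}\,d_{ji}=0,
$$
which I would rewrite as the homogeneous system
$$
\begin{pmatrix}
\omega_{jk} & \omega_{ik}\\
\omega_{j\ell} & \omega_{i\ell}
\end{pmatrix}
\begin{pmatrix}
d_{ij}\\ d_{ji}
\end{pmatrix}
=
\begin{pmatrix}
0\\ 0
\end{pmatrix}.
$$
The next step is to observe that the coefficient matrix here is obtained from the matrix in the hypothesis by interchanging its two columns, so its determinant equals $-\det\begin{pmatrix}\omega_{ik}&\omega_{jk}\\\omega_{i\ell}&\omega_{j\ell}\end{pmatrix}$, which is nonzero by assumption.

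I would then conclude that the coefficient matrix is invertible, forcing the only solution of the homogeneous system to be the trivial one, namely $d_{ij}=d_{ji}=0$, as claimed. I do not expect a genuine obstacle in this argument: it reduces entirely to recognizing the two instances of \eqref{eq:der1} as an invertible $2\times 2$ system whose determinant is precisely the quantity assumed nonzero in the statement. The only point demanding a little care is the bookkeeping of the column ordering, so that the stated determinant (rather than its negative) is the one governing invertibility; this amounts to the harmless sign produced by the column swap and does not affect the conclusion.
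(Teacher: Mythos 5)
Your proof is correct and follows essentially the same route as the paper: both extract the two instances of \eqref{eq:der1} for the indices $k,\ell\in D^1(i)$ and observe that the resulting homogeneous $2\times 2$ system has coefficient determinant equal (up to sign) to the one assumed nonzero, forcing $d_{ij}=d_{ji}=0$. If anything, your version is marginally more careful, since the paper routes the argument through Proposition \ref{prop:1}(i) and divides by $\omega_{jk}$ and $\omega_{j\ell}$, which the hypothesis does not guarantee to be nonzero, whereas working directly with \eqref{eq:der1} avoids any division.
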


\begin{proof}
Assume $|D^1(i)|\geq 2$ and note that $k, \ell \in D^1(i)$ implies by Proposition \ref{prop:1}(i) that
$$d_{ij}=-\frac{\omega_{ik}}{\omega_{jk}}d_{ji}, \text{ and }d_{ij}=-\frac{\omega_{i\ell}}{\omega_{j\ell}}d_{ji}.$$
This in turns implies 
$$(\omega_{ik}\omega_{j\ell}-\omega_{i\ell}\omega_{jk})d_{ji}=0,$$
so by hypothesis we obtain $d_{ji}=0$ and again by Proposition \ref{prop:1}(i) we have $d_{ij}=0$. 
\end{proof}

\begin{prop}\label{prop:general0}
Let $\A$ be a $n$-dimensional evolution algebra with $n \geq 3$. Let $B=\{e_i\}_{i\in \Lambda}$ be a natural basis of $\A$ with structure matrix $M_B=(\omega_{ij})$, and let $d\in \Der\left(\A\right)$. Consider $i,j\in \Lambda$, with $i\neq j$. If 
\begin{equation}\label{ceros}
\left(\sum_{k\in\Lambda}\omega_{ik}^2\right) \left(\sum_{k\in\Lambda}\omega_{jk}^2\right) - \left(\sum_{k\in\Lambda}\omega_{ik}\omega_{jk}\right)^2 \neq 0,
\end{equation}
then $d_{ij}=d_{ji}=0$.
\end{prop}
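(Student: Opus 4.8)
The plan is to show that the hypothesis \eqref{ceros} forces the existence of indices $k,\ell\in D^1(i)$ making the determinant in Proposition \ref{prop:general1} nonzero, so that Proposition \ref{prop:general1} applies directly and yields $d_{ij}=d_{ji}=0$. The quantity on the left of \eqref{ceros} is recognizable as a Gram--Cauchy--Schwarz expression: writing $r_i=(\omega_{i1},\ldots,\omega_{in})$ and $r_j=(\omega_{j1},\ldots,\omega_{jn})$ for the $i$-th and $j$-th rows of the structure matrix, the expression equals $\|r_i\|^2\|r_j\|^2-\langle r_i,r_j\rangle^2$. By the Binet--Cauchy identity this is precisely $\sum_{k<\ell}(\omega_{ik}\omega_{j\ell}-\omega_{i\ell}\omega_{jk})^2$, i.e. the sum of squares of all $2\times 2$ minors formed from the two rows. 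Hence \eqref{ceros} holds if and only if at least one such $2\times 2$ minor is nonzero.

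First I would establish this reformulation explicitly: expand the product of sums and regroup to obtain
\[
\left(\sum_{k}\omega_{ik}^2\right)\left(\sum_{k}\omega_{jk}^2\right)-\left(\sum_{k}\omega_{ik}\omega_{jk}\right)^2=\sum_{k<\ell}\left(\omega_{ik}\omega_{j\ell}-\omega_{i\ell}\omega_{jk}\right)^2.
\]
Since $\chart(\K)=0$, a sum of squares vanishes if and only if each term vanishes, so \eqref{ceros} guarantees there is a pair $k<\ell$ with $\omega_{ik}\omega_{j\ell}-\omega_{i\ell}\omega_{jk}\neq 0$. Next I would argue that for this pair both $k$ and $\ell$ must lie in $D^1(i)$: if, say, $\omega_{ik}=0$ and $\omega_{i\ell}=0$, the minor would be zero; and if only one of $\omega_{ik},\omega_{i\ell}$ is nonzero the minor reduces to a single product which still requires the corresponding $\omega_{i\cdot}$ entry to be nonzero. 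More carefully, a nonzero minor $\omega_{ik}\omega_{j\ell}-\omega_{i\ell}\omega_{jk}$ cannot have both $\omega_{ik}=\omega_{i\ell}=0$, so at least one of $k,\ell$ is in $D^1(i)$; to get both into $D^1(i)$ I would note that if exactly one, say $\omega_{ik}\neq 0$ and $\omega_{i\ell}=0$, then the minor is $-\omega_{i\ell}\omega_{jk}=\ -0\cdot\omega_{jk}$, wait—this needs the symmetric handling, so I would instead simply invoke that $|D^1(i)|\ge 2$ is forced: indeed if $|D^1(i)|\le 1$ then at most one row entry $\omega_{ik}$ is nonzero and every $2\times 2$ minor of the two rows vanishes, contradicting \eqref{ceros}, so $|D^1(i)|\ge 2$ and the nonzero minor necessarily has both indices in $D^1(i)$.

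With $|D^1(i)|\ge 2$ and a pair $k,\ell\in D^1(i)$ whose minor is nonzero, the hypotheses of Proposition \ref{prop:general1} are met, and that proposition gives $d_{ij}=d_{ji}=0$, completing the proof. The main obstacle I anticipate is the careful bookkeeping in the reduction step: ensuring that a nonzero $2\times 2$ minor really does certify $|D^1(i)|\ge 2$ with both its indices in $D^1(i)$, rather than forcing the indices into $D^1(j)$ only. The cleanest route around this is the observation above—that $|D^1(i)|\le 1$ annihilates every minor built from row $i$, regardless of row $j$—which immediately promotes \eqref{ceros} to $|D^1(i)|\ge 2$ and locates a usable pair inside $D^1(i)$, after which Proposition \ref{prop:general1} finishes the argument.
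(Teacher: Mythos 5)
Your Binet--Cauchy reformulation is correct and useful: the left-hand side of \eqref{ceros} does equal $\sum_{k<\ell}\left(\omega_{ik}\omega_{j\ell}-\omega_{i\ell}\omega_{jk}\right)^2$, and since a sum all of whose terms vanish is zero, \eqref{ceros} certainly produces a pair $k,\ell$ with a nonzero $2\times 2$ minor. (Be careful, though, not to assert the converse ``a sum of squares vanishes only if each term vanishes'': over $\mathbb{C}$, which has characteristic $0$, this is false; fortunately you only need the trivial direction.) The genuine gap is the final promotion step. A nonzero minor $\omega_{ik}\omega_{j\ell}-\omega_{i\ell}\omega_{jk}$ does \emph{not} force $k,\ell\in D^1(i)$, and your fallback claim that $|D^1(i)|\le 1$ annihilates every minor is false: if only $\omega_{ik_0}\neq 0$ in row $i$, the minor for the pair $(k_0,\ell)$ equals $\omega_{ik_0}\omega_{j\ell}$, which is nonzero whenever $\ell\in D^1(j)\setminus\{k_0\}$. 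Concretely, take $n=3$ with structure matrix the identity, so $r_i=(1,0,0)$ and $r_j=(0,1,0)$: the quantity in \eqref{ceros} equals $1$, yet $|D^1(i)|=|D^1(j)|=1$, so Proposition \ref{prop:general1} is not applicable with either index playing the role of ``$i$''. Your reduction therefore cannot cover all cases in which \eqref{ceros} holds; Proposition \ref{prop:general0} is strictly stronger than what Proposition \ref{prop:general1} delivers.

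The route can be repaired by a case split on the nonzero minor: if $\omega_{ik}\neq 0$ and $\omega_{i\ell}\neq 0$, invoke Proposition \ref{prop:general1}; if, say, $\omega_{i\ell}=0$, then $\omega_{ik}\omega_{j\ell}\neq 0$, and the two instances of \eqref{eq:der1} for $\ell$ and for $k$ read $\omega_{j\ell}d_{ij}=0$ and $\omega_{jk}d_{ij}+\omega_{ik}d_{ji}=0$, giving first $d_{ij}=0$ and then $d_{ji}=0$. But at that point you are essentially re-proving the statement from \eqref{eq:der1} directly, which is what the paper does in one stroke: the $n$ equations $\omega_{jk}d_{ij}+\omega_{ik}d_{ji}=0$ form a linear system $Ax=0$ in $x=(d_{ji},d_{ij})^{T}$, where $A$ is the $n\times 2$ matrix with rows $(\omega_{ik},\omega_{jk})$; multiplying by $A^{T}$ gives $A^{T}Ax=0$, and \eqref{ceros} says exactly that the Gram matrix $A^{T}A$ is nonsingular, whence $x=0$. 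That argument needs no case analysis and no hypothesis on $D^1(i)$ at all.
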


\begin{proof}Fix $i,j\in \Lambda$, with $i\neq j$. If we consider the homogeneous system $\omega_{jk}d_{ij}+\omega_{ik}d_{ji}=0$, for $k\in \Lambda$, with variables $d_{ij}$ then the system is an over determined system. By applying the least-squares solution we get the homogeneous system
$$(d_{ji} d_{ij}) \begin{pmatrix}
 \displaystyle\sum_{k\in\Lambda}\omega_{ik}^2    & \displaystyle\sum_{k\in\Lambda}\omega_{ik}\omega_{jk} \\[.4cm]
\displaystyle\sum_{k\in\Lambda}\omega_{ik}\omega_{jk}     & \displaystyle\sum_{k\in\Lambda}\omega_{jk}^2
\end{pmatrix}=0$$
which has trivial solution provided its determinant is non-zero.
\end{proof}

\begin{remark} \rm
Proposition \ref{prop:general0} holds for evolution algebras of dimension three or higher. In the case of a two-dimensional evolution algebra, the above proposition means as follows if the evolution algebra is perfect, i.e., $A=A^2$, then $d_{12}=d_{21}=0$ being the natural basis $B=\{e_1,e_2\}$. 
\end{remark}

\begin{exa} \rm
Let $\A$ be an evolution algebra and a natural basis $B$ with product
$$
\begin{array}{ccl}
e_1^2&=&(1/2)e_1 -(1/4)e_2,\\[.2cm] 
e_2^2&=&-2e_1+e_2,\\[.2cm]
e_3^2&=&2e_1+e_2.
\end{array}
$$
Note that we have a unique twin class, that is, $\Pi_B(\Lambda)=\{\T_1\}$, where $\T_1=\{1,2,3\}$. The proposition above implies that $d_{13}=d_{31}=d_{23}=d_{32}=0$, which simplifies the task of calculating the derivations. Indeed, a straightforward calculation shows that the derivations are given by
$$\begin{pmatrix}
d_{11} & -(1/2)d_{11} & 0 \\
-2d_{11} & d_{11} & 0 \\
0 & 0 & 0
\end{pmatrix}.$$
\end{exa}

Notice that in the previous example we have a unique twin class. Moreover, observe that in this example $i\in D^1(i)$ whether $i\in \{1,2\}$, and we can write $d=A_1 = B_1 \oplus B_2$ where the $B_i$ matrices are related to those vertices with and without loops in the associated directed graph, respectively. This suggest that a good strategy to understand the behavior of the matrices $A_i$ in Theorem \ref{thm:main2} could be focusing in the existence of loops in each directed graph associated to each twin partition. 

\smallskip

\begin{prop}
Let $\A$ be an $n$-dimensional evolution algebra with a natural basis $B=\{e_i\}_{i\in \Lambda}$. Let $\T$ be a twin class relative to $B$ of $\Lambda$ such that $\T=\{i\},$ for certain $i \in \Lambda$. If $i\in D^1(i)$ then $d_{ii}=0.$ 
\end{prop}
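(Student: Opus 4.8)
The plan is to show that every off-diagonal entry in the $i$-th column of the derivation matrix vanishes, that is $d_{ji}=0$ for all $j\neq i$, and then to read off $d_{ii}=0$ from the diagonal instance of Proposition \ref{prop:1}(iv).

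First I would exploit that $\T=\{i\}$ is a singleton twin class: this means $i$ is a twin of no other index, so $D^1(i)\neq D^1(j)$ for every $j\in\Lambda$ with $j\neq i$. From this I would deduce $d_{ji}=0$ for each such $j$. If non-degeneracy is assumed this is immediate from Lemma \ref{lema:2}, since $i\not\sim_{t_B}j$ forces $d_{ij}=d_{ji}=0$. In general I would argue directly from Proposition \ref{prop:1}. If $D^1(i)\not\subseteq D^1(j)$, i.e.\ $D^1(i)\cap\left(D^1(j)\right)^c\neq\emptyset$, then part (ii) gives $d_{ji}=0$ at once. Otherwise $D^1(i)\subsetneq D^1(j)$, so $D^1(j)\cap\left(D^1(i)\right)^c\neq\emptyset$, and part (ii), applied with the two indices interchanged, yields $d_{ij}=0$; moreover $i\in D^1(i)\subseteq D^1(j)$ gives $i\in D^1(i)\cap D^1(j)$, so part (i) with the auxiliary index taken to be $i$ reads $d_{ij}=-\frac{\omega_{ii}}{\omega_{ji}}d_{ji}$, and since $\omega_{ii}\neq 0$ and $\omega_{ji}\neq 0$ the vanishing of $d_{ij}$ transfers to $d_{ji}=0$.

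With the column cleared, I would finish by applying Proposition \ref{prop:1}(iv) to the vertex $i$ with free index equal to $i$ as well. Because $i\in D^1(i)$, the corresponding branch of (iv) reads
\[
\sum_{k\in D^1(i)}\omega_{ik}d_{ki}=2\omega_{ii}d_{ii}.
\]
Every summand with $k\neq i$ drops out by the previous step, so the left-hand side reduces to $\omega_{ii}d_{ii}$; hence $\omega_{ii}d_{ii}=2\omega_{ii}d_{ii}$, that is $\omega_{ii}d_{ii}=0$. Since $i\in D^1(i)$ is precisely the statement $\omega_{ii}\neq 0$, I conclude $d_{ii}=0$.

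The step that requires care is the vanishing of the off-diagonal column entries, since the hypothesis only gives the set inequality $D^1(i)\neq D^1(j)$, whereas Proposition \ref{prop:1}(ii) needs a one-sided strict failure of containment to kill a prescribed entry. The subset case $D^1(i)\subsetneq D^1(j)$ is thus the genuine obstacle, and it is exactly here that the hypothesis $i\in D^1(i)$ enters, through part (i), to turn the already known $d_{ij}=0$ into the $d_{ji}=0$ demanded by the final computation. Everything afterwards is a single substitution.
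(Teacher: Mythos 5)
Your proof is correct, and its skeleton is the same as the paper's: clear the off-diagonal entries $d_{ki}$, $k\neq i$, of the $i$-th column, then feed Proposition \ref{prop:1}(iv) with $j=i$ the identity $\omega_{ii}d_{ii}=2\omega_{ii}d_{ii}$ and use $\omega_{ii}\neq 0$. The difference lies in how the column is cleared. The paper simply cites Lemma \ref{lema:2}: since $\T=\{i\}$ is a singleton twin class, $k\not\sim_{t_B} i$ for every $k\neq i$, hence $d_{ki}=0$. That lemma, however, carries a non-degeneracy hypothesis (it goes through Proposition \ref{prop:1}(iii)), which the proposition's statement does not list explicitly and which the paper relies on only via the standing assumption of Section 2.2. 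Your two-case argument — either $D^1(i)\cap(D^1(j))^c\neq\emptyset$, killing $d_{ji}$ directly by part (ii), or $D^1(i)\subsetneq D^1(j)$, where part (ii) kills $d_{ij}$ and the hypothesis $i\in D^1(i)$ puts $i$ in $D^1(i)\cap D^1(j)$ so that part (i) with $k=i$ transfers the vanishing to $d_{ji}$ — avoids non-degeneracy altogether, so your proof establishes the proposition exactly as stated, for an arbitrary evolution algebra. The cost is a slightly longer case analysis; the gain is a marginally more general result and independence from Lemma \ref{lema:2}.
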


\begin{proof}
This is a direct consequence of Proposition \ref{prop:1}(iv) and Lemma \ref{lema:2}. Indeed, if we take $j=i$ we obtain, as $i\in D^1(i)$, the following expression:
$$\omega_{ii}d_{ii} + \sum_{k\in D^1(i)\cap \T^c}\omega_{ik}d_{ki} = 2\omega_{ii}d_{ii},$$
but, Lemma \ref{lema:2} implies $d_{ki}=0$ for any $k\in \T^c$. Therefore, $d_{ii}=0$.  
\end{proof}

\smallskip
From now on our approach is to subdivide a twin class into two disjoint sets formed by those vertices with a loop (we use $wl$ in the notation) and those vertices without a loop (we use $nl$ in our notation). 

\begin{prop}\label{prop:loops1}
Let $\A$ be an $n$-dimensional evolution algebra with a natural basis $B=\{e_i\}_{i\in \Lambda}$. Let $\T$ be a twin class relative to $B$ of $\Lambda$. Assume that $\mathcal{T}=\mathcal{T}^{wl}\cup \mathcal{T}^{nl}$, where 
$$\mathcal{T}^{wl}:=\{i\in \mathcal{T}: i\in D^{1}(i)\}\neq \emptyset,$$
and $\mathcal{T}^{nl}:= \mathcal{T} \setminus \mathcal{T}^{wl}$. Also, assume that $d_{ij}=0$ for any $i,j\in \mathcal{T}^{wl}, i\neq j$. Then,

\begin{enumerate}
\item[(i)] $d_{ii}=0$ for any $i\in \mathcal{T}$.
\item[(ii)] if $j\in \mathcal{T}^{nl}$ and $i\in \mathcal{T}^{wl}$ we have
$$\sum_{k\in \mathcal{T}^{wl}} w_{ik} d_{kj}=0.$$

\noindent
In other words, if $W^{(wl)}:=(w_{ij})_{i,j \in \mathcal{T}^{wl}}$ and $\tilde{D}:=(d_{ij})_{i\in \mathcal{T}^{wl}, j\in \mathcal{T}^{nl}}$ then $W^{(wl)} \tilde{D} = 0$.
\item[(iii)] If $W^{(wl)}=(w_{ij})_{i,j \in \mathcal{T}^{wl}}$ is a non-singular matrix, then $d_{ij}=0$ for any $i,j$ such that $i\in \T^{wl}$ or $j\in \T^{wl}$.

\end{enumerate}
\end{prop}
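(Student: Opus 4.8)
The plan is to carry out everything relative to the common descendant set of the twin class. Since $\mathcal{T}$ is a twin class, all of its members share the same first-generation descendants; write $D:=D^1(i)$ for any $i\in\mathcal{T}$. The decisive structural observation is that $\mathcal{T}^{wl}=\mathcal{T}\cap D$ and $\mathcal{T}^{nl}=\mathcal{T}\setminus D$, because $i\in\mathcal{T}^{wl}$ means exactly $i\in D^1(i)=D$. A reduction that I will invoke repeatedly is the following: whenever the second index of a $d$-entry lies in $\mathcal{T}$, only the contributions coming from $\mathcal{T}$ survive. Indeed, if $k\in D\setminus\mathcal{T}$ and $j\in\mathcal{T}$, then $k\not\sim_{t_B}j$, so Lemma \ref{lema:2} gives $d_{kj}=0$. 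Hence any sum $\sum_{k\in D}\omega_{ik}d_{kj}$ with $j\in\mathcal{T}$ collapses to $\sum_{k\in\mathcal{T}^{wl}}\omega_{ik}d_{kj}$.

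For part (i) I would treat the two halves of $\mathcal{T}$ separately, and in this order. If $i\in\mathcal{T}^{wl}$, apply Proposition \ref{prop:1}(iv) with $j=i$ (legitimate since $i\in D^1(i)$); by the collapse above together with the hypothesis $d_{ki}=0$ for $k\in\mathcal{T}^{wl}\setminus\{i\}$, the left-hand side reduces to the single diagonal term $\omega_{ii}d_{ii}$, so $\omega_{ii}d_{ii}=2\omega_{ii}d_{ii}$, and since $\omega_{ii}\neq 0$ this forces $d_{ii}=0$. If instead $i\in\mathcal{T}^{nl}$, no self-loop relation is available, so I would route through a loop vertex: choose $j\in\mathcal{T}^{wl}$ (nonempty by assumption); then $j\in D=D^1(i)$ and Proposition \ref{prop:1}(iv) gives $2\omega_{ij}d_{ii}=\sum_{k\in D}\omega_{ik}d_{kj}$. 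The right-hand side collapses to $\sum_{k\in\mathcal{T}^{wl}}\omega_{ik}d_{kj}$, whose off-diagonal terms vanish by hypothesis and whose diagonal term $\omega_{ij}d_{jj}$ vanishes because $d_{jj}=0$ was just established. As $\omega_{ij}\neq 0$ (because $j\in D^1(i)$), we conclude $d_{ii}=0$.

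For part (ii), fix $i\in\mathcal{T}^{wl}$ and $j\in\mathcal{T}^{nl}$. Then $j\notin D=D^1(i)$, so the ``$j\notin D^1(i)$'' branch of Proposition \ref{prop:1}(iv) gives $\sum_{k\in D}\omega_{ik}d_{kj}=0$; the collapse reduces this to $\sum_{k\in\mathcal{T}^{wl}}\omega_{ik}d_{kj}=0$, which is exactly the stated identity, and entry by entry it is the matrix equation $W^{(wl)}\tilde{D}=0$. Part (iii) is then bookkeeping: non-singularity of $W^{(wl)}$ turns $W^{(wl)}\tilde{D}=0$ into $\tilde{D}=0$, i.e. $d_{ij}=0$ for $i\in\mathcal{T}^{wl}$, $j\in\mathcal{T}^{nl}$. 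Combining this with part (i) (the diagonal), the hypothesis (both indices in $\mathcal{T}^{wl}$), Lemma \ref{lema:2} (one index outside $\mathcal{T}$), and Proposition \ref{prop:1}(i) to transpose the $\mathcal{T}^{nl}\times\mathcal{T}^{wl}$ entries out of the already-vanishing $\mathcal{T}^{wl}\times\mathcal{T}^{nl}$ ones (here $D^1(i)\cap D^1(j)=D\neq\emptyset$, so $d_{ij}=-(\omega_{ik}/\omega_{jk})\,d_{ji}=0$), I would cover every pair with $i\in\mathcal{T}^{wl}$ or $j\in\mathcal{T}^{wl}$.

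I expect the main obstacle to be the loop-free diagonal case of (i): a vertex of $\mathcal{T}^{nl}$ has no self-relation to exploit, so the argument only closes by borrowing a loop vertex $j$ and using that $d_{jj}$ has already been shown to vanish; consequently the two sub-cases must be proved in the stated order. The other delicate point, used silently throughout, is the legitimacy of collapsing each $D$-sum to a $\mathcal{T}^{wl}$-sum via Lemma \ref{lema:2}; I would flag explicitly that this step relies on the standing non-degeneracy hypothesis of the section, which is what guarantees that cross-twin-class entries $d_{kj}$ vanish.
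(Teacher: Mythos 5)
Your proposal is correct and follows essentially the same route as the paper: part (i) is proved first for $\mathcal{T}^{wl}$ via Proposition \ref{prop:1}(iv) with $j=i$ and then for $\mathcal{T}^{nl}$ by routing through a loop vertex whose diagonal entry already vanishes, with each descendant sum collapsed to $\mathcal{T}^{wl}$ using Lemma \ref{lema:2}, and (ii)--(iii) follow by the same matricial reformulation. Your version is in fact slightly more careful than the paper's, since you spell out how the ``$i\in\T^{wl}$ or $j\in\T^{wl}$'' cases of (iii) are assembled and you flag the reliance on the section's standing non-degeneracy hypothesis.
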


\begin{proof}

Let us prove $(i)$. We consider two separate cases.\\ 

\noindent
{Let $i\in \mathcal{T}^{wl}$.} Then, Proposition \ref{prop:1} (iv) (by assuming $j=i$) implies that
$$2w_{ii}d_{ii} = \sum_{k\in D^{1}(i)} w_{ik} d_{ki} = \sum_{k\in D^{1}(i)\cap \mathcal{T}} w_{ik} d_{ki} + \sum_{k\in D^{1}(i)\cap \mathcal{T}^c} w_{ik} d_{ki}.$$
But 
$$\sum_{k\in D^{1}(i)\cap \mathcal{T}} w_{ik} d_{ki} = \sum_{k\in \mathcal{T}^{wl}} w_{ik} d_{ki},$$

\noindent
while 

$$\sum_{k\in D^{1}(i)\cap \mathcal{T}^c} w_{ik} d_{ki} =0,$$

\noindent
because $k\in D^{1}(i)\cap \mathcal{T}^{c}$ and $i$ belong to different twin classes, so $d_{ki}=0$. Then, we conclude that 

$$2w_{ii}d_{ii} =\sum_{k\in \mathcal{T}^{wl}} w_{ik} d_{ki}=w_{ii}d_{ii},$$

\noindent
because we are assuming $d_{ki}=0$ whether $k,i\in  \mathcal{T}^{wl}, k\neq i$. Then $d_{ii}=0$.\\

\noindent
{ Let $i\in \mathcal{T}^{nl}$.} Proposition \ref{prop:1} (iv) (by assuming $j\in \mathcal{T}^{wl}$) implies that 
$$2w_{ij}d_{ii} = \sum_{k\in D^{1}(i)} w_{ik} d_{kj} = \sum_{k\in D^{1}(i)\cap \mathcal{T}} w_{ik} d_{kj} + \sum_{k\in D^{1}(i)\cap \mathcal{T}^c} w_{ik} d_{kj}.$$

\noindent
In this case, we obtain reasoning as before

$$2w_{ij}d_{ii} = \sum_{k\in \mathcal{T}^{wl}} w_{ik} d_{kj} = w_{ij} d_{jj} + \sum_{k\in \mathcal{T}^{wl}, k\neq j} w_{ik} d_{kj}= w_{ij} d_{jj} =0,$$

\noindent
where $d_{jj}=0$ because of the previous case. So $d_{ii}=0.$\\

\noindent
Now let us consider $(ii)$. Take $i\in \mathcal{T}^{wl}$ and $j\in \mathcal{T}^{nl}$. We apply Proposition \ref{prop:1} (iv) to obtain

$$ \sum_{k\in D^{1}(i)} w_{ik} d_{kj} = 2 w_{ij} d_{ii} =0.$$

\noindent
Then

$$0=  \sum_{k\in D^{1}(i)} w_{ik} d_{kj} =  \sum_{k\in D^{1}(i)\cap \mathcal{T}} w_{ik} d_{kj} + \sum_{k\in D^{1}(i)\cap \mathcal{T}^c} w_{ik} d_{kj} =  \sum_{k\in  \mathcal{T}^{wl}} w_{ik} d_{kj},$$

\noindent
and the proof is complete. We note that condition $(ii)$ may be written in matricial form. The result is the statement of condition $(iii)$. 

\end{proof}

\begin{exa} \rm
Let $\A$ an evolution algebra with product 
$$
\begin{array}{ccl}
e_1^2&=&e_1-e_2+e_3,\\[.2cm]
e_2^2&=&e_1+e_2-e_3,\\[.2cm] 
e_3^2&=&e_1+e_2-e_3,\\[.2cm]
e_4^2&=&-e_1-e_2+e_3,
\end{array}
$$
and let $d\in Der(\A)$. We point out that the structure matrix has range $2$ so it is not covered by \cite{COT,Elduque/Labra/2019}. Note that this case is not covered by \cite{Alsarayreh/Qaralleh/Ahmad/2017,PMP2,cardoso,Mukhamedov/Khakimov/Omirov/Qaralleh/2019} neither. Here we have $\Pi_B(\Lambda)=\T$, where $\T=\{1,2,3,4\}$. Moreover, $\T^{wl}=\{1,2,3\}$ and $\T^{nl}=\{4\}$, and we can apply Proposition \ref{prop:general0} to conclude that $d_{14}=d_{41}=0$ and $d_{ij}=d_{ji}=0$ provided $i,j\in \T^{wl}$, $i\neq j$. Then Proposition \ref{prop:loops1}(i) implies $d_{ii}=0$ for any $i\in \T$. Finally, Proposition \ref{prop:loops1}(ii) and \eqref{eq:der1} implies that $d$ is equal to

$$\begin{pmatrix}
0 & 0 & 0 & 0 \\
0 & 0 & 0 & d_{24} \\
0 & 0 & 0 & d_{24} \\
0 & d_{24} & d_{24}  & 0 \\
\end{pmatrix}.$$
\end{exa}


\begin{exa} \rm
Consider the evolution algebra $\A$ with natural basis $B=\{e_1,e_2,e_3,e_4,e_5\}$ such that

$$
\begin{array}{ccl}
 e_1^2&=& e_1 + 2\, e_2 + 3\, e_3,\\[.2cm]
 e_2^2&=&  e_1 + e_2 +  3\, e_3,\\[.2cm]
 e_3^2&=& 2\, e_1 + e_2 + e_3,\\[.2cm]
 e_4^2&=& -2\, e_1 + e_2 - e_3,\\[.2cm]
  e_5^2&=& e_1 + e_2 + e_3.\\[.2cm]
\end{array}
$$
This is another example of $n$-dimensional evolution algebra whose structure matrix has rank equals to $n-2$. Therefore, the results in \cite{COT,Elduque/Labra/2019} does not apply, and the results in \cite{PMP3,cardoso} neither. Here we have $\Pi_B(\Lambda)=\T$ with $\T^{wl}=\{1,2,3\}$ and $\T^{nl}=\{4,5\}$. By Proposition \ref{prop:general0} we have $d_{ij}=0$ provided $i,j\in \T^{wl}$, $i\neq j$. In addition, by Proposition \ref{prop:loops1}(iii) we conclude that $d_{ij}=0$ provided $i$ or $j$ belongs to $\T^{wl}$. In fact, note that 
$$
W^{(wl)}=\begin{pmatrix}
1&2&3\\
1&1&3\\
2&1&1
\end{pmatrix},
$$
is a non-singular matrix. Finally, Proposition \ref{prop:general1} allows to conclude that $d_{ij}=0$ for $i,j\in \T^{nl}$ and therefore if $d\in \Der(\A)$ then $d=0$.

\end{exa}

\begin{lemma}
Let $\A$ be a non-degenerate $n$-evolution algebra with a natural basis $B=\{e_i\}_{i \in \Lambda}$ and structure matrix $M_B=(\omega_{ij})$. Suppose that there exists a twin class $\T$ such that $\T^{wl}=\emptyset$. Let $i \in \T$ and suppose that for every $j_1, \, j_2 \in D^1(i)$, $\T_{j_1}\neq \T_{j_2}$. Moreover, we assume that $d_{jj}=0$ for certain $j \in D^1(i)$. Then $d_{ii}=0$ for every $i \in \T$.
 
\end{lemma}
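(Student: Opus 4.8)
The plan is to exploit the derivation identity in Proposition \ref{prop:1}(iv) for a single, carefully chosen column index, and to use Lemma \ref{lema:2} to discard all but one term of the resulting sum. First I would record that since $\T$ is a twin class, the first-generation descendant set is constant on $\T$: writing $D:=D^1(i)$ for some $i\in \T$, we have $D^1(i')=D$ for every $i'\in \T$. In particular the hypotheses do not depend on which element of $\T$ is named $i$, so it suffices to fix an arbitrary $i\in \T$ and show $d_{ii}=0$. Let $j\in D$ be the index with $d_{jj}=0$ supplied by the hypothesis, and note that $\omega_{ij}\neq 0$ precisely because $j\in D^1(i)$.

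Next I would apply Proposition \ref{prop:1}(iv) to $i$ with column index $j$. Since $j\in D^1(i)$ we land in the second case and obtain
\[
\sum_{k\in D}\omega_{ik}d_{kj}=2\omega_{ij}d_{ii}.
\]
The key step is to collapse the left-hand side to a single summand. For $k\in D$ with $k\neq j$, the hypothesis that distinct descendants lie in distinct twin classes guarantees $k\not\sim_{t_B} j$, so Lemma \ref{lema:2}, read contrapositively, forces $d_{kj}=0$. Hence every off-diagonal term vanishes and only the $k=j$ term survives, leaving $\omega_{ij}d_{jj}=2\omega_{ij}d_{ii}$.

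Finally I would substitute $d_{jj}=0$ to reach $2\omega_{ij}d_{ii}=0$, and conclude $d_{ii}=0$ using that $\omega_{ij}\neq 0$ and $\chart(\K)=0$ (so $2$ is invertible in $\K$). Since $i\in \T$ was arbitrary and the same $j\in D$ serves for every $i\in\T$ (because $D^1$ is constant on $\T$, whence $\omega_{ij}\neq 0$ for all such $i$), this yields $d_{ii}=0$ for all $i\in \T$.

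The only genuinely delicate point is the collapse of the sum: one must verify that the assumption on the twin classes of the descendants is exactly what annihilates the cross terms $d_{kj}$ through Lemma \ref{lema:2}, and that choosing $j$ \emph{inside} $D^1(i)$ keeps $\omega_{ij}\neq 0$ so that the final division is legitimate. The assumption $\T^{wl}=\emptyset$ plays an auxiliary role here, ensuring $i\notin D^1(i)$ so that the diagonal index $i$ never appears among the summation indices and no spurious term $\omega_{ii}d_{ij}$ intervenes.
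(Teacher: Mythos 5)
Your proposal is correct and follows essentially the same route as the paper: kill the cross terms $d_{kj}$ for $k\in D^1(i)\setminus\{j\}$ via the contrapositive of Lemma \ref{lema:2} (using that distinct descendants lie in distinct twin classes), then collapse Proposition \ref{prop:1}(iv) to $\omega_{ij}d_{jj}=2\omega_{ij}d_{ii}$ and divide by $\omega_{ij}\neq 0$. Your extra remarks (that $D^1$ is constant on $\T$, so one $j$ works for all $i\in\T$, and that $2$ is invertible) only make explicit what the paper leaves implicit.
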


\begin{proof}
Note that  $d_{j_1j_2}=d_{j_2j_1}=0$ for every $j_1, \, j_2 \in D^1(i)$ because $j_1\nsim_{t_B} j_2$. On the other hand, by Proposition \ref{prop:1}(iv) we have that $$\displaystyle\sum_{k\in D^1(i)}\omega_{ik}d_{kj}=2\omega_{ij}d_{ii}$$ which implies that  $\omega_{ij}d_{jj}=2\omega_{ij}d_{ii}$ since $d_{kj}=0$ if $k \neq j$. Finally, as $d_{jj}=0$ we get that $d_{ii}=0$ for every $i \in \T$.
\end{proof}


\begin{lemma}\label{lema221}
Let $\A$ be an $n$-evolution algebra with a natural basis $B=\{e_i\}_{i \in \Lambda}$ and structure matrix $M_B=(\omega_{ij})$. Suppose that there exists a twin class $\T$ and an element $i \in \T$ such that $D^1(j)=\{i\}$ for every $j \in \T$. Then $d_{ij}=d_{ji}=d_{ii}=d_{jj}=0$ for every $j\in \T$.
 \end{lemma}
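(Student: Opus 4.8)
The plan is to derive everything from the two derivation identities collected in Proposition \ref{prop:1}, exploiting the fact that the hypothesis pins down $D^1(j)$ completely for each $j\in\T$. The first observation I would record is that, since $i\in\T$, applying the hypothesis with $j=i$ forces $D^1(i)=\{i\}$. In particular $i\in D^1(i)$, so $\omega_{ii}\neq 0$; and $i\in D^1(j)$ for every $j\in\T$ gives $\omega_{ji}\neq 0$. These nonvanishing structure constants are precisely what will legitimize the divisions performed below, so no global non-degeneracy hypothesis is required: the twin class $\T$ silently carries its own loop at $i$.

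Next I would extract the entire $i$-th row of $d$. Because $D^1(i)=\{i\}$, the sum in Proposition \ref{prop:1}(iv) (with first index $i$) collapses to the single term $\omega_{ii}d_{ij'}$. Taking $j'=i\in D^1(i)$ yields $\omega_{ii}d_{ii}=2\omega_{ii}d_{ii}$, whence $d_{ii}=0$; taking any $j'\neq i$, so that $j'\notin D^1(i)$, yields $\omega_{ii}d_{ij'}=0$, whence $d_{ij'}=0$. In particular $d_{ij}=0$ for every $j\in\T$, which settles the first and third of the four asserted vanishings.

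The remaining entries then fall out in sequence. Fix $j\in\T$ with $j\neq i$. Since $D^1(j)=\{i\}$, Proposition \ref{prop:1}(iv) applied to the index $j$ collapses to $\omega_{ji}d_{ij'}$; taking $j'=i\in D^1(j)$ gives $\omega_{ji}d_{ii}=2\omega_{ji}d_{jj}$, and because $d_{ii}=0$ and $\omega_{ji}\neq 0$ this forces $d_{jj}=0$. Finally, since $D^1(i)\cap D^1(j)=\{i\}\neq\emptyset$, Proposition \ref{prop:1}(i) gives $d_{ij}=-\frac{\omega_{ii}}{\omega_{ji}}\,d_{ji}$; substituting the already-established $d_{ij}=0$ and dividing by the nonzero factor $\omega_{ii}/\omega_{ji}$ yields $d_{ji}=0$. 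This covers all four families of entries for every $j\in\T$, completing the argument.

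The proof is essentially mechanical, so I do not expect a genuine obstacle; the only point that requires care is the \emph{ordering} of the deductions. One must compute the full $i$-th row ($d_{ii}$ together with all $d_{ij'}$) first, because these values are exactly the inputs that make the subsequent $d_{jj}$ and $d_{ji}$ identities collapse to a single divisible term. The conceptual point worth flagging is the opening remark that $i\in\T$ guarantees a loop at $i$, hence $\omega_{ii}\neq 0$ and $\omega_{ji}\neq 0$; without it the divisions in Proposition \ref{prop:1}(i) and in the diagonal computations would be unjustified.
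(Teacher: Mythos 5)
Your proof is correct and follows essentially the same route as the paper's: both arguments reduce to applying conditions \eqref{eq:der1}--\eqref{eq:der2} (via Proposition \ref{prop:1}) after observing that $D^1(i)=\{i\}$ collapses every sum to a single term with nonzero coefficient $\omega_{ii}$ or $\omega_{ji}$. The only cosmetic difference is that the paper routes the vanishing of $d_{ij}$ and $d_{ji}$ through Proposition \ref{prop:loops1}(ii) and Corollary \ref{dij0}, whereas you invoke Proposition \ref{prop:1}(iv) and (i) directly; your explicit remark that the hypothesis itself forces $\omega_{ii}\neq0$ and $\omega_{ji}\neq0$ is a welcome clarification.
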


\begin{proof}
Let $j \in \T \setminus{\{i\}}$. Note that $j \in \T^{nl}$ and $i\in \T^{wl}$. By Proposition \ref{prop:loops1} we get $\omega_{ii}d_{ij}=0$ therefore $d_{ij}=0$. By Corollary \ref{dij0}, $d_{ji}=0$. Moreover, by \eqref{eq:der2}, $w_{ii}d_{ii}=2\omega_{ii}d_{ii}$ so $d_{ii}=0$. Again, by \eqref{eq:der2}, $\omega_{ji}d_{ii}=2\omega_{ji}d_{jj}$ then $d_{jj}=0$.

\end{proof}

\begin{lemma}\label{lema222}
Let $\A$ be an $n$-evolution algebra with a natural basis $B=\{e_i\}_{i \in \Lambda}$  and structure matrix $M_B=(\omega_{ij})$. Let $\T$ be a twin class relative to $B$ and we consider the set $D^1(\T)$. Suppose that there exists $k \in \Lambda$ such that $D^1(k) \cap \T=\{j\}$ for certain $j \in \T$. Then $d_{jl}=d_{lj}=0$ for every $l \in \T \setminus \{j\}$. 
\end{lemma}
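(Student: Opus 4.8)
The plan is to fix an arbitrary $l\in\T\setminus\{j\}$ and prove $d_{jl}=0$ first; the companion identity $d_{lj}=0$ will then be immediate from Corollary \ref{dij0}, since $j$ and $l$ lie in the same twin class and hence $j\sim_{t_B}l$. So the whole argument reduces to establishing that the single entry $d_{jl}$ vanishes, for each choice of $l$.

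To obtain $d_{jl}=0$ I would feed the hypothesis on $k$ into equation \eqref{eq:der2}, that is, into Proposition \ref{prop:1}(iv), applied with row index $k$ and column index $l$. The first observation is that, because $D^1(k)\cap\T=\{j\}$ and $l\in\T\setminus\{j\}$, the index $l$ does not belong to $D^1(k)$; hence $\omega_{kl}=0$ and Proposition \ref{prop:1}(iv) gives
$$\sum_{t\in D^1(k)}\omega_{kt}\,d_{tl}=0.$$
The key step is then to argue that only the term $t=j$ survives in this sum. Indeed, $D^1(k)\cap\T=\{j\}$ forces every other index $t\in D^1(k)\setminus\{j\}$ to lie outside $\T$, so $t$ and $l$ belong to different twin classes, i.e. $t\not\sim_{t_B}l$; by Lemma \ref{lema:2} this yields $d_{tl}=0$ for all such $t$. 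Therefore the displayed identity collapses to $\omega_{kj}\,d_{jl}=0$, and since $j\in D^1(k)$ guarantees $\omega_{kj}\neq 0$, we conclude $d_{jl}=0$. Applying Corollary \ref{dij0} then gives $d_{lj}=0$, which finishes the argument for every $l\in\T\setminus\{j\}$.

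The main obstacle is precisely the collapsing step: recognizing that the contributions indexed by $t\in D^1(k)\setminus\{j\}$ drop out of $\sum_{t\in D^1(k)}\omega_{kt}d_{tl}$. This is exactly where the block-diagonal structure of a derivation with respect to the twin partition is used, through Lemma \ref{lema:2} (equivalently Theorem \ref{thm:main2}), and it is also the point at which non-degeneracy of $\A$ is implicitly required, since Lemma \ref{lema:2} is stated under that assumption. Once this vanishing is in hand, the condition $D^1(k)\cap\T=\{j\}$ does all the remaining work by pinning the sum down to the lone coefficient $\omega_{kj}\neq 0$.
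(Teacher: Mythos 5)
Your argument is correct and coincides with the paper's own proof: both apply equation \eqref{eq:der2} (Proposition \ref{prop:1}(iv)) with row index $k$, use Lemma \ref{lema:2} to kill the terms indexed by $D^1(k)\cap\T^c$, reduce to $\omega_{kj}d_{jl}=2\omega_{kl}d_{kk}=0$ since $\omega_{kl}=0$ for $l\in\T\setminus\{j\}$, and finish with Corollary \ref{dij0}. Your side remark that Lemma \ref{lema:2} quietly imports a non-degeneracy hypothesis not stated in the lemma is a fair observation, but it does not change the substance of the argument.
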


\begin{proof}
Let $\T$ be a twin class and $k \in \Lambda$ which verifies $D^1(k) \cap \T=\{j\}$. By \eqref{eq:der2}, $\omega_{kj}d_{jl}=2\omega_{kl}d_{kk}$ for every $l \in \T$ because $d_{jl}=0$ for every $l \in \T^c$ (see Lemma \ref{lema:2}) and $D^1(k) \cap \T=\{j\}$. If  $l \in \T \setminus \{j\}$ then $\omega_{kj}d_{jl}=0$. Therefore $d_{jl}=0$. Moreover, by Lemma \ref{dij0} we get $d_{lj}=0$.
\end{proof}

\begin{lemma}\label{lema223}
Let $\A$ be an evolution algebra with a natural basis $B=\{e_i\}_{i \in \Lambda}$ and structure matrix $M_B=(\omega_{ij})$. Let $\T$ be a twin class relative to $B$ and we consider $k \in D^1(\T)$. Suppose that  $D^1(k) \subseteq \T \cup \{k\}$. Moreover, we suppose that $d_{sk}=d_{ks}=0$ for every $s \in D^1(\T)\setminus\{k\}$. If $D^1(\T)\cap \T=\{i,j\}$ for certain $i,j \in \T$ with $i,j \neq k$ then $d_{ji}=d_{ij}=d_{ii}=d_{jj}=d_{kk}=0$. 
\end{lemma}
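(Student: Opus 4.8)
The plan is to reduce the whole statement to the single claim $d_{kk}=0$. Indeed, once $d_{kk}=0$ is known, the relation $d_{kk}=2d_{ii}=2d_{jj}$ established below gives $d_{ii}=d_{jj}=0$, and then \eqref{eq:der2} with $a=b=i$ (respectively $a=b=j$) forces $d_{ij}=d_{ji}=0$. I would begin by recording the structural facts imposed by the hypotheses. Since $k\in D^1(\T)$ while $D^1(\T)\cap\T=\{i,j\}$ and $k\neq i,j$, we have $k\notin\T$. Since $i,j\in D^1(\T)=D^1(m)$ for every $m\in\T$, both $i$ and $j$ carry a loop, so $\{i,j\}\subseteq\T^{wl}$; in fact $\T^{wl}=\{i,j\}$ because any looped vertex of $\T$ belongs to $D^1(\T)\cap\T$. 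To get $d_{kk}=2d_{ii}=2d_{jj}$ I would apply Proposition~\ref{prop:1}(iv) to the pair $(i,k)$: as $k\in D^1(i)$ the right-hand side equals $2\omega_{ik}d_{ii}$, while the hypothesis $d_{sk}=0$ for $s\in D^1(\T)\setminus\{k\}$ annihilates every term of the left-hand sum except the one with index $k$, leaving $\omega_{ik}d_{kk}$; since $\omega_{ik}\neq0$ this yields $d_{kk}=2d_{ii}$, and symmetrically $d_{kk}=2d_{jj}$, so $d_{ii}=d_{jj}$.

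I would then distinguish cases according to whether $k$ carries a loop. If $k\in D^1(k)$, Proposition~\ref{prop:1}(iv) applied to the pair $(k,k)$ finishes immediately: since $D^1(k)\subseteq\T\cup\{k\}$ and $d_{ck}=0$ for every $c\in\T$ (by Lemma~\ref{lema:2}, as no element of $\T$ is a twin of $k$), the only surviving term is $c=k$, giving $\omega_{kk}d_{kk}=2\omega_{kk}d_{kk}$ and hence $d_{kk}=0$. Suppose now $k$ has no loop, so $D^1(k)\subseteq\T$. First I would treat the pair $(i,j)$ on its own: \eqref{eq:der1} with third indices $i$ and $j$, together with \eqref{eq:der2} for $(a,b)\in\{i,j\}^2$, produce a homogeneous system whose solvability is controlled by $\det\!\begin{pmatrix}\omega_{ii}&\omega_{ij}\\ \omega_{ji}&\omega_{jj}\end{pmatrix}$. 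When this determinant is nonzero the system forces $d_{ij}=d_{ji}=0$ at once, and then \eqref{eq:der2} gives $d_{ii}=0$, hence $d_{kk}=0$. The genuine difficulty is the vanishing determinant case, in which one only extracts that rows $i$ and $j$ of the structure matrix are proportional and that $d_{ij}=-\tfrac{\omega_{ii}}{\omega_{jj}}d_{ii}$, $d_{ji}=-\tfrac{\omega_{jj}}{\omega_{ii}}d_{ii}$, together with $\omega_{ij}\omega_{jj}=-\omega_{ii}^2$ and $\omega_{ii}\omega_{ji}=-\omega_{jj}^2$.

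To close this last case I would substitute these expressions into \eqref{eq:der2} for the pairs $(k,i)$ and $(k,j)$, using $d_{kk}=2d_{ii}$. Assuming for the moment that only the contributions $c\in\{i,j\}$ survive in each left-hand sum, and supposing $d_{ii}\neq0$ towards a contradiction, the two identities become (after dividing by $d_{ii}$) a linear system in $(\omega_{ki}\omega_{ii},\,\omega_{kj}\omega_{jj})$ with coefficient matrix $\begin{pmatrix}3&1\\ 1&3\end{pmatrix}$; since this matrix is invertible it forces $\omega_{ki}\omega_{ii}=\omega_{kj}\omega_{jj}=0$, whence $\omega_{ki}=\omega_{kj}=0$ and therefore $D^1(k)=\emptyset$. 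This contradicts the absence of sinks, so $d_{ii}=0$ and hence $d_{kk}=0$. The main obstacle is precisely the justification that only the indices $c\in\{i,j\}$ contribute to $\sum_{c\in D^1(k)}\omega_{kc}d_{ci}$: this is automatic when $\T=\{i,j\}$, which is the configuration occurring in the applications (in dimension three, $k\notin\T$ already forces $|\T|\le2$), whereas in general it amounts to showing that the cross entries $d_{ci}$ with $c\in\T\setminus\{i,j\}$ vanish, which I would obtain from the singleton-intersection mechanism of Lemma~\ref{lema222}.
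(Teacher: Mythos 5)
Your reduction to the single claim $d_{kk}=0$ is sound, and in the case $k\in D^1(k)$ your computations are exactly the paper's, in a permuted order: the paper first derives $\omega_{kk}d_{kk}=2\omega_{kk}d_{kk}$ from \eqref{eq:der2} at the pair $(k,k)$, killing the terms $d_{hk}$ with $h\in\T$ by Lemma \ref{lema:2}; then it gets $d_{ii}=d_{jj}=\tfrac12 d_{kk}=0$ from the pairs $(i,k)$ and $(j,k)$ using the hypothesis $d_{sk}=0$ for $s\in D^1(\T)\setminus\{k\}$; finally it obtains $d_{ji}=0$ from the pair $(i,i)$ and $d_{ij}=0$ from Corollary \ref{dij0}. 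The paper does \emph{not} split on whether $k$ carries a loop: it simply reads $d_{kk}=0$ off the identity $\omega_{kk}d_{kk}=2\omega_{kk}d_{kk}$, an inference that is conclusive only when $\omega_{kk}\neq 0$. So your instinct that the unlooped case needs a separate argument is a legitimate criticism of the paper's own proof rather than a divergence from its method.

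The genuine gap is in your treatment of that unlooped case, and you have named it without closing it. Your equations at the pairs $(k,i)$ and $(k,j)$ involve sums $\sum_{c\in D^1(k)}\omega_{kc}d_{ci}$ in which $c$ ranges over a subset of $\T$ that need not be contained in $\{i,j\}$; for $c\in\T\setminus\{i,j\}$ the entry $d_{ci}$ cannot be killed by Lemma \ref{lema:2}, since $c\sim_{t_B} i$. The proposed repair via Lemma \ref{lema222} does not go through: that lemma requires some vertex whose descendant set meets $\T$ in a singleton, and nothing in the present hypotheses supplies one. Hence your argument is complete only when $\T=\{i,j\}$ or $D^1(k)\subseteq\{i,j\}$ --- which does cover the three-dimensional applications (types 17 and 21 of the table), but not the lemma as stated. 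Note the structural reason the paper's computation avoids this obstacle whenever $\omega_{kk}\neq 0$: every sum it writes with second index in $\T$ runs over $D^1(i)=D^1(\T)$, and $D^1(\T)\cap\T=\{i,j\}$ excludes the troublesome indices, while its only sum over $D^1(k)$ has second index $k\notin\T$, where Lemma \ref{lema:2} applies to every term.
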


\begin{proof}
First, we will see that $d_{kk}=0$. Note that $k \in \T^c$. By \eqref{eq:der2} and $D^1(k) \subseteq \T \cup \{k\}$  we have that $$\displaystyle\sum_{h\in \T}\omega_{kh}d_{hk}+\omega_{kk}d_{kk}=2\omega_{kk}d_{kk}. $$ 
Applying Lemma \ref{lema:2} we get that $d_{kk}=0$. If we consider the descendants of $i$ and \eqref{eq:der2} we get that 
$$\displaystyle\sum_{h\in D^1(\T)\setminus\{k\}}\omega_{ih}d_{hk}+\omega_{ik}d_{kk}=2\omega_{ik}d_{ii}$$. Since $d_{kk}=0$
this implies that $d_{ii}=0$. Similarly, $d_{jj}=0$. On the other hand, again by \eqref{eq:der2} 
$$ \omega_{ii}d_{ii}+\omega_{ij}d_{ji}+\displaystyle\sum_{h\in \T^c}\omega_{ih}d_{hi}=2\omega_{ii}d_{ii}.$$ 
But $d_{hi}=0$ for every $h \in \T^c$ applying Lemma \ref{lema:2} and $d_{ii}=0$ therefore $d_{ji}=0$ and by Lemma \ref{dij0} $d_{ij}=0$.
\end{proof}


\smallskip
\section{Derivations of non-degenerate irreducible $3$-dimensional\\ evolution algebras}

In this section we apply our results to identify the derivations of any non-degenerate and irreducible $3$-dimensional evolution algebra. In order to accomplish this task we consider such an evolution algebra, and we consider its associated directed graph. We know from \cite{Elduque/Labra/2015} that any evolution algebra with a given natural basis induces a unique directed graph. Moreover, our results show how strong is the connection between the structure of such associated graph and the derivations of the considered evolution algebra. As we state in Theorem \ref{thm:main} if the graph is twin-free then the only derivation is the null map. Thus we will focus our attention in studying those cases where the associated graph is not twin-free. Since we have only $3$ vertices, the only possibilities are that we have a twin class with three or with two elements.

The table below shows the associated graph and the derivations of the corresponding $3$-dimensional evolution algebras depending on the number of the non-zero entries in its structure matrix i.e., the number of arrows in the associated graph. For each case we identify the elements, and the descendants, in the biggest twin class. 

Section \ref{section:null} summarize many sufficient conditions to guarantee the existence of null elements in the derivation matrix of a given evolution algebra. In what follows we shall see that through a suitable application of these results we can identify many cases of evolution algebras which the only derivation is the null map. 

Indeed, those evolution algebras of type $2, 4, 5$ and 6  have derivations zero as a consequence of Lemma \ref{lema221}, Lemma \ref{lema:2}, Lemma \ref{dij0} and Lemma \ref{lema:1}, in that order. On the other hand, the evolution algebras associated to the types $17$ and $21$ have derivations zero by Lemma \ref{lema223}. Also, notice that any evolution algebra of type $3,$ $8$ to $12$, $14, 15, 16, 18, 20$ or $22$ has derivations equal to zero by Lemma
\ref{lema222}, Lemma \ref{lema:2}, Lemma \ref{dij0} and Lemma \ref{lema:1}, in that order. On the other hand, the first column, the first row and the main diagonal of the derivation matrix associated to the evolution algebras of type $1$ is zero because of Lemma \ref{lema221}. Finally, the zeros in the derivation matrix associated to the evolution algebras of type $7$ and $19$ are consequence of Lemma \ref{lema:2}.


We point out that after the application of our results the task of identifying derivations is reduced to the study of only $5$ from the $23$ different evolution algebras whose directed graph is not twin-free. 

\newpage


\begin{center}
\begin{figure}[H]
\scalebox{0.80}{
\begin{tabular}{|c||c||c||c||C{3.4cm}||c|}
\hline
&&&&&\\
  Type & \rm{Number of arrows} & \rm{Twin class: $\T$} & $D^1(\T)$ & \rm{Graph} & \rm{Derivation} \\
 &&&&&\\
\hline
\hline
&&&&&\\[-0.2cm]
1
 &
3
 &
$\{i,j,k\}$
 &
$\{i\}$
 &

 \centerline{$\xymatrix@-1pc@R=10pt@C=0.4pt{
  		& \bullet_{i} \uloopr{} & & \\
  		\bullet_{j}\ar[ur] & &  \bullet_{k} \ar[ul]  \\
  		  	}$}

 &

$\begin{pmatrix}
0 & 0 & 0 \\
0 & 0 & d_{jk}  \\
0 & -\dfrac{\omega_{ki}}{\omega_{ji}}d_{jk}  & 0  \\
\end{pmatrix}$

 \\
&&&&&\\[-0.2cm]
\hline
\hline
&&&&&\\[-0.2cm]
2
 &
3
 &
$\{i,j\}$
 &
$\{i\}$
 &
  \centerline{$\xymatrix@-1pc@R=10pt@C=0.4pt{
  	& \bullet_{i} \uloopr{} & & \\
  		\bullet_{j}\ar[ur] & &  \bullet_{k} \ar[ll]  \\
  		  	}$}
  	
 &
$\begin{pmatrix}
0 & 0&0 \\
0 & 0 & 0 \\
0 & 0 & 0  \\
\end{pmatrix}$
\\
&&&&&\\[-0.2cm]
\hline
\hline
&&&&&\\[-0.2cm]

 3
 &
3
 &
$\{i,j\}$
 &
$\{k\}$
 &
  \centerline{$\xymatrix@-1pc@R=10pt@C=0.4pt{
  	& \bullet_{i} \ar[dr] & & \\
  		\bullet_{j}\ar@/^{-8pt}/[rr]& &  \bullet_{k} \ar@/^{-8pt}/[ll]  \\
  		  		}$}
 &

$\begin{pmatrix}
0 & 0&0 \\
0 & 0 & 0 \\
0 & 0 & 0  \\
\end{pmatrix}$

 \\
&&&&&\\[-0.2cm]
\hline
\hline
&&&&&\\[-0.2cm]

4
&
4
&
$\{i,j\}$
 &
$\{i\}$

&
\centerline{$\xymatrix@-1pc@R=10pt@C=0.4pt{
		& \bullet_{i} \uloopr{} & & \\
  		\bullet_{j}\ar[ur] & &  \bullet_{k} \ar[ll] \ar[ul] \\
  		  	}$}
&
$
\begin{pmatrix}
0 & 0&0 \\
0 & 0 & 0 \\
0 & 0 & 0  \\
\end{pmatrix}
$
\\

&&&&&\\[-0.2cm]
\hline
\hline
&&&&&\\[-0.2cm]

5
 &
4
 &
$\{i,j\}$
 &
$\{i\}$

&
\centerline{$\xymatrix@-1pc@R=10pt@C=0.4pt{
		& \bullet_{i} \uloopr{} & & \\
  		\bullet_{j}\ar[ur] & &  \bullet_{k} \uloopd{} \ar[ul] \\
  		  	}$}
&
$
\begin{pmatrix}
0 & 0&0 \\
0 & 0 & 0 \\
0 & 0 & 0  \\
\end{pmatrix}
$
 \\
&&&&&\\[-0.2cm]
\hline
\hline
&&&&&\\[-0.2cm]
6
 &
4
 &
$\{i,j\}$
 &
$\{i\}$

&
\centerline{$\xymatrix@-1pc@R=10pt@C=0.4pt{
		& \bullet_{i} \uloopr{} & & \\
  		\bullet_{j}\ar[ur] & &  \bullet_{k} \ar[ll] \uloopd{} \\
  		  	}$}
&
$
\begin{pmatrix}
0 & 0&0 \\
0 & 0 & 0 \\
0 & 0 & 0  \\
\end{pmatrix}
$

 \\
&&&&&\\[-0.2cm]
\hline
\hline
&&&&&\\[-0.2cm]
7
 &
4
 &
$\{i,j\}$
 &
$\{k\}$
 &
  \centerline{$\xymatrix@-1pc@R=10pt@C=0.4pt{
  	& \bullet_{i} \ar@/^{-5pt}/[dr] & & \\
  		\bullet_{j}\ar@/^{-5pt}/[rr]& &  \bullet_{k} \ar@/^{-5pt}/[ll] \ar@/^{-5pt}/[ul] \\
  		  		}$}
 &

$\begin{pmatrix}
d_{ii} & \dfrac{3\,\omega_{kj}}{\omega_{ki}}d_{ii} & 0 \\
\dfrac{3\,\omega_{ki}}{\omega_{kj}}d_{ii} & d_{ii} & 0 \\
0 & 0 & 2d_{ii} \\
\end{pmatrix}$
 \\
&&&&&\\[-0.2cm]
\hline
\hline
&&&&&\\[-0.2cm]
8
 &
4
 &
$\{i,j\}$
 &
$\{k\}$

&
\centerline{$\xymatrix@-1pc@R=10pt@C=0.4pt{
  	& \bullet_{i} \ar@/^{-5pt}/[dr] & & \\
  		\bullet_{j}\ar[rr]& &  \bullet_{k} \uloopd{}  \ar@/^{-5pt}/[ul] \\
  		  		}$}
 &
$
\begin{pmatrix}
0 & 0&0 \\
0 & 0 & 0 \\
0 & 0 & 0  \\
\end{pmatrix}
$
 \\
&&&&&\\[-0.2cm]
\hline
\hline
&&&&&\\[-0.2cm]
9
&
5
&
$\{i,j\}$
&
$\{i,j\}$
&
\centerline{$\xymatrix@-1pc@R=10pt@C=0.4pt{
  	& \bullet_{i} \ar@/^{-5pt}/[dl] \uloopr{}& & \\
  		\bullet_{j}\ar@/^{-5pt}/[ur] \dloopd{} & &  \bullet_{k}   \ar[ul] \\
  		  		}$}
&
$
\begin{pmatrix}
0 & 0&0 \\
0 & 0 & 0 \\
0 & 0 & 0  \\
\end{pmatrix}
$
 \\
&&&&&\\
\hline
\end{tabular}}
\end{figure}
\end{center}

\begin{center}
	\begin{figure}[H]
		\scalebox{0.70}{
			\begin{tabular}{|c||c||c||c||C{3.4cm}||c|}
				\hline
				&&&&&\\
				Type & \rm{Number of arrows} & \rm{Twin class: $\T$} & $D^1(\T)$ & \rm{Graph} & \rm{Derivation} \\
				&&&&&\\
				\hline
				\hline
				&&&&&\\[-0.2cm]
				
			10
			&
			5
			&
			$\{i,j\}$
 &
$\{i,k\}$

&
\centerline{$\xymatrix@-1pc@R=10pt@C=0.4pt{
  	& \bullet_{i} \ar@/^{-5pt}/[dr] \uloopr{} & & \\
  		\bullet_{j}\ar[rr] \ar[ur]& &  \bullet_{k}   \ar@/^{-5pt}/[ul] \\
  		  		}$}
 &
$
\begin{pmatrix}
0 & 0&0 \\
0 & 0 & 0 \\
0 & 0 & 0  \\
\end{pmatrix}
$
				
				\\
				&&&&&\\[-0.2cm]
				\hline
				\hline
				&&&&&\\[-0.2cm]
				
					11
					&
					5
					&
					$\{i,j\}$
 &
$\{i,k\}$

&
\centerline{$\xymatrix@-1pc@R=10pt@C=0.4pt{
  	& \bullet_{i} \ar[dr] \uloopr{} & & \\
  		\bullet_{j}\ar@/^{-5pt}/[rr] \ar[ur]& &  \bullet_{k}   \ar@/^{-5pt}/[ll] \\
  		  		}$}
 &
$
\begin{pmatrix}
0 & 0&0 \\
0 & 0 & 0 \\
0 & 0 & 0  \\
\end{pmatrix}
$

				\\
				&&&&&\\[-0.2cm]
				\hline
				\hline
				&&&&&\\[-0.2cm]

			12
				&
				5
				&
			$\{i,j\}$
 &
$\{i,k\}$

&
\centerline{$\xymatrix@-1pc@R=10pt@C=0.4pt{
  	& \bullet_{i} \ar[dr] \uloopr{} & & \\
  		\bullet_{j}\ar[rr] \ar[ur]& &  \bullet_{k} \uloopd{}   \\
  		  		}$}
 &
$
\begin{pmatrix}
0 & 0&0 \\
0 & 0 & 0 \\
0 & 0 & 0  \\
\end{pmatrix}
$

				\\
				&&&&&\\[-0.2cm]
				\hline
				\hline
				&&&&&\\[-0.2cm]
					13
					&
					6
					&
				$\{i,j,k\}$
				&
				$\{i,j\}$

					&
					\centerline{$\xymatrix@-1pc@R=10pt@C=0.4pt{
  	& \bullet_{i} \ar@/^{-5pt}/[dl] \uloopr{} & & \\
  		\bullet_{j}\ar@/^{-5pt}/[ur] \dloopd{}& &  \bullet_{k}  \ar[ll] \ar[ul]   \\
  		  		}$}
 &
$
\begin{pmatrix}
d_{ii} & \dfrac{d_{ii}\omega_{ij}}{\omega_{ii}}& d_{ik} \\
\dfrac{d_{ii}\omega_{ii}}{\omega_{ij}} & d_{ii} & -\dfrac{d_{ik}\omega_{ii}}{\omega_{ij}} \\
-\dfrac{d_{ik}\omega_{ki}}{\omega_{ii}} & \dfrac{d_{ik}\omega_{ki}}{\omega_{jj}} & d_{ii}  \\
\end{pmatrix}
$

				\\
				&&&&&\\[-0.2cm]
				\hline
				\hline
				&&&&&\\[-0.2cm]
				
					14
					&
					6
					&
				$\{i,j\}$
				&
				$\{i,j\}$

					&
					\centerline{$\xymatrix@-1pc@R=10pt@C=0.4pt{
  	& \bullet_{i} \ar@/^{-5pt}/[dl] \uloopr{} & & \\
  		\bullet_{j}\ar@/^{-5pt}/[ur] \dloopd{}& &  \bullet_{k}  \ar[ll] \uloopd{}   \\
  		  		}$}
 &
$
\begin{pmatrix}
0 & 0&0 \\
0 & 0 & 0 \\
0 & 0 & 0  \\
\end{pmatrix}
$
					
				\\
				&&&&&\\[-0.2cm]
				\hline
				\hline
				&&&&&\\[-0.2cm]
				
					15
					&
					6
					&
					$\{i,j\}$
				&
				$\{i,k\}$

					&
					\centerline{$\xymatrix@-1pc@R=10pt@C=0.4pt{
  	& \bullet_{i} \ar@/^{-5pt}/[dr] \uloopr{} & & \\
  		\bullet_{j}\ar@/^{-5pt}/[rr] \ar[ur]& &  \bullet_{k}  \ar@/^{-5pt}/[ll] \ar@/^{-5pt}/[ul]   \\
  		  		}$}
 &
$
\begin{pmatrix}
0 & 0&0 \\
0 & 0 & 0 \\
0 & 0 & 0  \\
\end{pmatrix}
$
			
				\\
				&&&&&\\[-0.2cm]
				\hline
				\hline
				&&&&&\\[-0.2cm]
				
				16
				&
				6
				&
				$\{i,j\}$
				&
				$\{i,k\}$

					&
					\centerline{$\xymatrix@-1pc@R=10pt@C=0.4pt{
  	& \bullet_{i} \ar[dr] \uloopr{} & & \\
  		\bullet_{j}\ar@/^{-5pt}/[rr] \ar[ur]& &  \bullet_{k}   \ar@/^{-5pt}/[ll] \uloopd{}  \\
  		  		}$}
 &
$
\begin{pmatrix}
0 & 0&0 \\
0 & 0 & 0 \\
0 & 0 & 0  \\
\end{pmatrix}
$

				\\
				&&&&&\\[-0.2cm]
				\hline
				\hline
				&&&&&\\[-0.2cm]
				
				17
				&
				7
				&
				$\{i,j\}$
				&
				$\{i,j,k\}$

					&
					\centerline{$\xymatrix@-1pc@R=10pt@C=0.4pt{
  	& \bullet_{i} \ar@/^{-5pt}/[dl] \ar[dr] \uloopr{} & & \\
  		\bullet_{j}\ar[rr] \ar@/^{-5pt}/[ur] \dloopd{}& &  \bullet_{k}   \uloopd{}  \\
  		  		}$}
 &
$
\begin{pmatrix}
0 & 0&0 \\
0 & 0 & 0 \\
0 & 0 & 0  \\
\end{pmatrix}
$
\\

				&&&&&\\
				\hline
				\hline
				&&&&&\\[-0.2cm]

			18
				&
				7
				&
			$\{i,j\}$
 &
$\{i,j,k\}$

&
\centerline{$\xymatrix@-1pc@R=10pt@C=0.4pt{
  	& \bullet_{i} \ar@/^{-5pt}/[dr] \ar@/^{-5pt}/[dl] \uloopr{} & & \\
  		\bullet_{j}\ar[rr] \dloopd{} \ar@/^{-5pt}/[ur]& &  \bullet_{k}   \ar@/^{-5pt}/[ul]\\
  		  		}$}
 &
$
\begin{pmatrix}
0 & 0&0 \\
0 & 0 & 0 \\
0 & 0 & 0  \\
\end{pmatrix}
$

				\\
				&&&&&\\
				\hline
			\end{tabular}}
		\end{figure}
	\end{center}
	
	\begin{center}
	\begin{figure}[H]
		\scalebox{0.70}{
			\begin{tabular}{|c||c||c||c||C{3.4cm}||c|}
				\hline
				&&&&&\\
				Type & \rm{Number of arrows} & \rm{Twin class: $\T$} & $D^1(\T)$ & \rm{Graph} & \rm{Derivation} \\
				&&&&&\\[-0.2cm]
				\hline
				\hline
				&&&&&\\[-0.2cm]
					
			19
			&
			7
			&
			$\{i,j\}$
 &
$\{i,j\}$

&
\centerline{$\xymatrix@-1pc@R=10pt@C=0.4pt{
  	& \bullet_{i} \ar@/^{-5pt}/[dl] \uloopr{} & & \\
  		\bullet_{j} \ar@/^{-5pt}/[ur] \dloopd{}& &  \bullet_{k}   \ar[ul] \ar[ll] \uloopd{}\\
  		  		}$}
 &
$
\begin{pmatrix}
-\dfrac{\omega_{jj}d_{ij}}{\omega_{ii}} & d_{ij}&0 \\
-\dfrac{\omega_{ji}d_{ij}}{\omega_{ii}} & \dfrac{\omega_{ji}d_{ij}}{\omega_{jj}} & 0 \\
0 & 0 & 0  \\
\end{pmatrix}
$
				
				\\
				&&&&&\\[-0.2cm]
				\hline
				\hline
				&&&&&\\[-0.2cm]
				
					20
					&
					7
					&
					$\{i,j\}$
 &
$\{i,k\}$

&
\centerline{$\xymatrix@-1pc@R=10pt@C=0.4pt{
  	& \bullet_{i} \ar@/^{-5pt}/[dr] \uloopr{} & & \\
  		\bullet_{j}\ar@/^{-5pt}/[rr] \ar[ur]& &  \bullet_{k} \uloopd{}  \ar@/^{-5pt}/[ll] \ar@/^{-5pt}/[ul]\\
  		  		}$}
 &
$
\begin{pmatrix}
0 & 0&0 \\
0 & 0 & 0 \\
0 & 0 & 0  \\
\end{pmatrix}
$

				\\
				&&&&&\\[-0.2cm]
				\hline
				\hline
				&&&&&\\[-0.2cm]

			21
				&
				8
				&
			$\{i,j\}$
 &
$\{i,j,k\}$

&
\centerline{$\xymatrix@-1pc@R=10pt@C=0.4pt{
  	& \bullet_{i} \ar@/^{-5pt}/[dr] \ar@/^{-5pt}/[dl] \uloopr{} & & \\
  		\bullet_{j}\ar@/^{-5pt}/[rr] \dloopd{} \ar@/^{-5pt}/[ur]& &  \bullet_{k}   \ar@/^{-5pt}/[ll]  \ar@/^{-5pt}/[ul]\\
  		  		}$}
 &
$
\begin{pmatrix}
0 & 0&0 \\
0 & 0 & 0 \\
0 & 0 & 0  \\
\end{pmatrix}
$

				\\
				&&&&&\\[-0.2cm]
				\hline
				\hline
				&&&&&\\[-0.2cm]
					22
					&
					8
				&
			$\{i,j\}$
 &
$\{i,j,k\}$

&
\centerline{$\xymatrix@-1pc@R=10pt@C=0.4pt{
  	& \bullet_{i} \ar@/^{-5pt}/[dr] \ar@/^{-5pt}/[dl] \uloopr{} & & \\
  		\bullet_{j}\ar[rr] \dloopd{} \ar@/^{-5pt}/[ur]& &  \bullet_{k}  \ar@/^{-5pt}/[ul]\uloopd{}\\
  		  		}$}
 &
$
\begin{pmatrix}
0 &0 &0 \\
0 & 0 & 0 \\
0 & 0 & 0  \\
\end{pmatrix}
$

				\\
				&&&&&\\[-0.2cm]
				\hline
				\hline
				&&&&&\\[-0.2cm]
				
					23
					&
					9
					&
				$\{i,j,k\}$
				&
				$\{i,j,k\}$

					&
					\centerline{$\xymatrix@-1pc@R=10pt@C=0.4pt{
  	& \bullet_{i} \ar@/^{-5pt}/[dr] \ar@/^{-5pt}/[dl] \uloopr{} & & \\
  		\bullet_{j}\ar@/^{-5pt}/[rr] \dloopd{} \ar@/^{-5pt}/[ur]& &  \bullet_{k}   \ar@/^{-5pt}/[ll]  \ar@/^{-5pt}/[ul] \uloopd{}\\
  		  		}$}
 &
$
$
See $\ast$
$ 
$

				\\
				&&&&&\\
				\hline
			\end{tabular}}
		\end{figure}
	\end{center}

$\ast$

\footnotesize{
$\begin{pmatrix}
-\left(\dfrac{\omega_{jj}d_{ij}}{\omega_{ii}}+\dfrac{\omega_{kk}d_{ik}}{\omega_{ii}}\right) & d_{ij}& d_{ik} \\
-\dfrac{\omega_{jj}d_{ij}}{\omega_{ij}} & -\left(\dfrac{\omega_{jj}d_{ij}}{\omega_{ii}}+\dfrac{\omega_{kk}d_{ik}}{\omega_{ii}}\right) & -\left(\dfrac{\omega_{jk}d_{ij}}{\omega_{ii}}+\dfrac{(\omega_{jk}\omega_{kk}+\omega_{ji}\omega_{ii})d_{ik}}{\omega_{ii}\omega_{jj}}\right) \\
-\dfrac{\omega_{kk}d_{ik}}{\omega_{ik}} & \dfrac{-\omega_{jj}}{\omega_{jk}}\left(\dfrac{\omega_{jj}}{\omega_{ii}}+\dfrac{\omega_{ji}}{\omega_{jj}}\right)d_{ij}- \dfrac{\omega_{kk}d_{ik}}{\omega_{ii}} & -\left(\dfrac{\omega_{jj}d_{ij}}{\omega_{ii}}+\dfrac{\omega_{kk}d_{ik}}{\omega_{ii}}\right)  \\
\end{pmatrix}$	}
	
	\par \noindent

\bigskip


\begin{thebibliography}{99}




\bibitem{Alsarayreh/Qaralleh/Ahmad/2017}
Alsarayreh A., Qaralleh I., Ahmad M. Z. Derivation of three dimensional evolution algebra. JP J. Algebra Number Theory Appl. 2017; No. 39(4):425-444.


\bibitem{YMV}
Cabrera Y., Siles M., Velasco M.V. Evolution algebras of arbitrary dimension and their decompositions. Linear Algebra Appl. 2016;  No. 496:122-162.

\bibitem{YMV2}
Cabrera C.Y., Siles M. M., Velasco M.V. Classification of three-dimensional evolution algebras. Linear Algebra Appl. 2017; No. 524:68-108.


\bibitem{PMP}
Cadavid P.,  Rodi\~no Montoya M. L., Rodriguez P. M. The connection between evolution algebras, random walks, and graphs. J. Algebra Appl., DOI: 10.1142/S0219498820500231.

\bibitem{PMP2}
Cadavid P.,  Rodi\~no Montoya M. L., Rodriguez P. M. On the isomorphisms between evolution algebras of graphs and random walks, Linear Multilinear Algebra, DOI: 10.1080/03081087.2019.1645807.

\bibitem{PMP3}
Cadavid P.,  Rodi\~no Montoya M. L., Rodriguez P. M. Characterization theorems for the spaces of derivations of evolution algebras associated to graphs. Linear Multilinear Algebra, DOI: 10.1080/03081087.2018.1541962.


\bibitem{camacho/gomez/omirov/turdibaev/2013}
Camacho L. M., G\'omez J. R., Omirov B. A., Turdibaev R. M. Some properties of evolution algebras. Bull. Korean Math. Soc. 2013; 50 No. 5:1481-1494. 

\bibitem{COT}
Camacho L. M., G\'omez J. R., Omirov B. A., Turdibaev R. M. The derivations of some evolution algebras. Linear Multilinear Algebra 2013; No. 61:309-322.

\bibitem{cardoso}
Cardoso M.I., Gonçalves D., Mart\'in D., Mart\'in C., Siles M., Squares and associative representations of two dimensional evolution algebras, arXiv:1807.02362, 2018.



\bibitem{casas/ladra/omirov/rozitov/2013}
Casas J.M., Ladra M., Omirov B.A., Rozikov U.A.. On nilpotent index and dibaricity of evolution algebras. Linear Algebra Appl. 2013; No. 439(1):90–105.

\bibitem{casas/ladra/rozitov/2011}
Casas J.M., Ladra M., Rozikov U.A. A chain of evolution algebras. Linear Algebra Appl. 2011; No. 435(4):852–870.


\bibitem{costa1}
 Costa R. On the derivations of gametic algebras for polyploidy with multiple alleles. Bol. Soc. Brasil. Mat. 1982; 13 No. 2:69-81.
 
\bibitem{costa2}
Costa R. On the derivation algebra of zygotic algebras for polyploidy with multiple alleles. Bol. Soc. Brasil. Mat. 1983; 14 No. 1:63-80.


\bibitem{Elduque/Labra/2015}
Elduque A., Labra A. Evolution algebras and graphs. J. Algebra Appl. 2015; No. 14:1550103.

\bibitem{Elduque/Labra/2019}
Elduque A., Labra A. Evolution algebras, automorphisms, and graphs. Linear Multilinear Algebra, DOI: 10.1080/03081087.2019.1598931.

\bibitem{falcon}
Falc\'on O.J. , Falc\'on R.M. , Nu\~nez J. Algebraic computation of genetic patterns related to three-dimensional evolution algebras. Appl Math Comput. 2018; No. 319:510–517.

\bibitem{gonshor}
Gonshor H. Derivations in genetic algebras. Comm. Algebra 1988; 16 No. 8:1525-1542.

\bibitem{gonzales}
Gonzalez S., Martinez C. Bernstein algebras with zero derivation algebra. Linear Algebra Appl. 1993; No. 191:235-244. 

\bibitem{holgate}
Holgate P. The interpretation of derivations in genetic algebras. Linear Algebra Appl. 1987; No. 85:75-79.





\bibitem{Mukhamedov/Qaralleh/2014}
Mukhamedov F., Qaralleh I. On Derivations Of Genetic Algebras. J. Phys.: Conf. Ser. 2014, No. 553:012004.

\bibitem{Mukhamedov/Khakimov/Omirov/Qaralleh/2019}
Mukhamedov F., Khakimov O., Omirov B., Qaralleh I. Derivations and automorphisms of nilpotent evolution algebras with maximal nilindex. J. Algebra Appl. 2019, No. 18(12):1950233.


\bibitem{paniello}
Paniello I. Evolution coalgebras. Linear Multilinear Algebra 2019, No. 67(8):1539-1553. 

\bibitem{peresi}
Peresi, L. A. The derivation algebra of gametic algebra for linked loci. Math. Biosci. 1988; 91 No. 2:151-156.

\bibitem{tian}
Tian J. P. Evolution algebras and their applications. Springer-Verlag Berlin Heidelberg, 2008.

\bibitem{tian2}
Tian J. P. Invitation to research of new mathematics from biology: evolution algebras. Topics in functional analysis and algebra, Contemp. Math. No. 672, 257-272, Amer. Math. Soc., Providence, RI, 2016.

\bibitem{tv}
Tian J.P., Vojtechovsky P. Mathematical concepts of evolution algebras in non-Mendelian genetics. Quasigroups Related Systems 2006; (1) No. 14:111-122.

\end{thebibliography}
\end{document}